\theoremstyle{plain}
\newtheorem{theorem}{Theorem}[section]
\newtheorem{conjecture}[theorem]{Conjecture}
\newtheorem{lemma}[theorem]{Lemma}
\newtheorem{proposition}[theorem]{Proposition}
\newtheorem{corollary}[theorem]{Corollary}
\newtheorem{remark}[theorem]{Remark}
\newtheorem*{property}{Property (*)}
\theoremstyle{remark}
\theoremstyle{definition}
\title[The Kontsevich-Soibelman conjecture]{\bf A proof of the $\ell$-adic version of the integral identity conjecture for polynomials }  
\author{L\^e Quy Thuong}
\address{Institut de Math\'ematique de Jussieu, UMR 7586 CNRS, 4 place Jussieu, 75005 Paris, France {\rm(current)}}
\email{leqthuong@math.jussieu.fr}
\address{Department of Mathematics, Vietnam National University, 334 Nguyen Trai Street, Hanoi, Vietnam}
\email{thuonglq@vnu.edu.vn}
\subjclass[2010]{14B20, 14B25, 14F20, 14F30, 14N35, 18E25}
\keywords{Berkovich analytic space, \'etale cohomology for analytic spaces, formal scheme, formal nearby cycles functor, generic fiber, motivic Milnor fiber}
\begin{document}           
\begin{abstract}
It is well known that the integral identity conjecture is of prime importance in Kontsevich-Soibelman's theory of motivic Donaldson-Thomas invariants for non-commutative Calabi-Yau threfolds. In this article we consider its numerical version and make it a complete demonstration in the case where the potential is a polynomial and the ground field is algebraically closed. The foundamental tool is the Berkovich spaces whose crucial point is how to use the comparison theorem for nearby cycles as well as the K\"{u}nneth isomorphism for cohomology with compact support.
\end{abstract}
\maketitle                 

\section{Introduction}\label{sec1}
Let us start by outlining due to \cite{KS2} on the concept of motivic Donaldson-Thomas invariants that concern the integral identity conjecture. These invariants is introduced in \cite{KS} in the framework for Calabi-Yau threfolds and the motivic Hall algebra. The latter generates the derived Hall algebra of To\"{e}n \cite{To}. 

Let $\mathcal{C}$ be an ind-constructible triangulated $A_{\infty}$-category over a field $\kappa$. By giving a constructible stability condition on $\mathcal{C}$ one considers a collection of full subcatgories $\mathcal{C}_V\subset\mathcal{C}$, with $V$ strict sectors in $\mathbb R^2$. The stability condition depends on homomorphisms $cl: K_0(\mathcal{C})\to\Gamma$ and $Z:\Gamma\to\mathcal{C}$, where $\Gamma$ is a free abelian group endowed with a skew-symmetric integer-valued bilinear form $\langle ,\rangle$. A choice of $V$ gives rise to a cone $C(V,Z)$ contained in $\Gamma\otimes\mathbb{R}$ to which one associates a complete motivic Hall algebra $\hat{H}(\mathcal{C}_V)$. Define $A_V^{\text{Hall}}$ invertible in $\hat{H}(\mathcal{C}_V)$ as characteristic functions of the stacks of objects of $\mathcal C_V$. The {\it generic} elements satisfy the Factorization Property
$$A_V^{\text{Hall}}=A_{V_1}^{\text{Hall}}\cdot A_{V_2}^{\text{Hall}}$$
with $V=V_1\sqcup V_2$ and the decomposition taken clockwisely.

If the field $\kappa$ has characteristic zero, motivic quantum torus $\mathcal{R}_{\mathcal{C}}$ is defined to be an associative algebra generated by symbols $\hat{e}_{\gamma}$, for $\gamma$ in $\Gamma$, with the usual relations 
$$\hat{e}_{\gamma_1}\hat{e}_{\gamma_2}=[\mathbb{A}_\kappa^1]^{\frac 1 2 \langle\gamma_1,\gamma_2\rangle}\hat{e}_{\gamma_1+\gamma_2},\ \hat{e}_0=1,$$
where $[\mathbb{A}_\kappa^1]^{\frac 1 2}$ is the square root of $[\mathbb{A}_{\kappa}^1]$. The coefficient ring $C_0$ for the quantum torus $\mathcal{R}_{\mathcal{C}}$ can be any commutative ring, where the two most important candidates should be a certain localization of the Grothendieck ring of algebraic $\kappa$-varieties and its $\ell$-adic version.

By choosing in addition the so-called orientation data (its existence depends on another conjecture) and using Denef-Loeser's theory of motivic Milnor fiber (e.g. the motivic Thom-Sebastiani theorem) of the potential of an object of the category $\mathcal{C}$, by  \cite[Sec. 6]{KS}, there is a map $\Phi_V:\hat{H}(\mathcal{C}_V)\to \mathcal{R}_{\mathcal{C}_V}$ for each $V$, which is nice enough in the sense that if it was a homomorphism the Factorization Property would be preserved. This is in fact obstructed because of the lack of an assertion of the integral identity. In the case where the above $C_0$ is a certain localization of the ring $\mathscr M_{\kappa}^{\hat{\mu}}$, one faces to the full version of the integral identity conjecture. If well passed, $A_V^{\text{mot}}:=\Phi_V(A_V^{\text{Hall}})$ would be invariants in the category of non-commutative Calabi-Yau threfolds, namely {\it motivic} Donaldson-Thomas invariants.  Also, if $C_0$ is a variant of the Grothendieck ring $K_0(D^b_{\text{constr},\text{aut}}(\text{Spec}(\kappa),\mathbb Q_{\ell}))$, one meets the $\ell$-adic version of the conjecture, and in this case, the corresponding invariants are {\it numerical} Donaldson-Thomas invariants.
 
In the context of non-archimedean complete discretely valued fields $K$ of equal characteristic zero, with valuation ring $R$ and residue field $\kappa$, Kontsevich-Soibelman define in \cite{KS} the motivic Milnor fiber $\mathcal S_{\mathfrak f, \mathbf x}$ of a formal function $\mathfrak f: \mathfrak X\to \text{Spf}(R)$ at a closed point $\mathbf x$ of the reduction $\mathfrak X_0$. To do this, they use Denef-Loeser's formula on the motivic nearby cycle of a regular function (cf. \cite{DL1, DL2}) as well as the fact that resolution of singularities of $(\mathfrak X, \mathfrak X_0)$ exists (see Temkin \cite{Tem1}). Let $\int_{\mathscr U}$ be the forgetful morphism for $\mathscr U$ a subvariety of $\mathfrak X_0$.

\begin{conjecture}[Integral identity \cite{KS}]\label{conj1}
Let $f$ be in $\kappa[[x,y,z]]$ invariant by the $\kappa^{\times}$-action of weight $(1,-1, 0)$ with $f(0,0,0)=0$. Denote by $\mathfrak X$ the formal neighborhood of $\mathbb A_{\kappa}^{d_1}$ in $\mathbb A_{\kappa}^d$ whose structural morphism $\hat{f}$ is induced by $f(x,y,z)$. Denote by $\mathfrak Z$ the formal neighborhood of $0$ in $\mathbb A_{\kappa}^{d_3}$ whose structural morphism $\hat{f}_{\mathfrak Z}$ is induced by $f(0,0,z)$. Then, the identity $\int_{\mathbf x\in\mathbb A_{\kappa}^{d_1}}\mathcal{S}_{\hat{f},\mathbf x}=[\mathbb A_{\kappa}^1]^{d_1}\mathcal{S}_{\hat{f}_{\mathfrak Z},0}$ holds in $\mathscr M_{\kappa}^{\hat{\mu}}$.
\end{conjecture}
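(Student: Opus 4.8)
The plan is to reduce the assertion to an identity between Denef--Loeser motivic Milnor fibers of the polynomials $f$ and $g(z):=f(0,0,z)$, and then to extract the factor $[\mathbb{A}^1]^{d_1}$ from the $\kappa^{\times}$-action by a Bialynicki--Birula fibration argument on an equivariant resolution; throughout, $\kappa$ is taken algebraically closed of characteristic zero, as in the setting of Denef--Loeser theory. First, since $f$ is a polynomial, the Nicaise--Sebag comparison between the motivic-volume description of the Kontsevich--Soibelman motivic Milnor fiber of a formal function and Denef--Loeser's arc-theoretic motivic Milnor fiber identifies $\mathcal{S}_{\hat f,\mathbf x}$ with the Denef--Loeser motivic Milnor fiber $\mathcal{S}_{f,\mathbf x}$ at every $\mathbf x\in\mathbb{A}_{\kappa}^{d_1}\subset f^{-1}(0)$, and likewise $\mathcal{S}_{\hat f_{\mathfrak Z},0}=\mathcal{S}_{g,0}$. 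Assembling the fibres into the relative motivic Milnor fiber $\mathcal{S}_{f}^{\mathbb{A}^{d_1}}$ over $\mathbb{A}_{\kappa}^{d_1}$, the left-hand side of the conjecture is its image under the forgetful morphism $\int_{\mathbb{A}_{\kappa}^{d_1}}$, so it suffices to prove $\big[\mathcal{S}_{f}^{\mathbb{A}^{d_1}}\big]=[\mathbb{A}^1]^{d_1}\,\mathcal{S}_{g,0}$ in $\mathscr{M}_{\kappa}^{\hat\mu}$. I would also record the consequences of invariance: every monomial $c_{a,b}(z)x^ay^b$ of $f$ has $|a|=|b|$, whence $f=g(z)+\sum_{i,j}x_iy_j\,h_{ij}(x,y,z)$; the fixed locus of the weight $(1,-1,0)$ action on $\mathbb{A}_{\kappa}^d$ is $\mathbb{A}_{\kappa}^{d_3}=\{x=y=0\}$, its attracting set is $\{y=0\}=\mathbb{A}_{\kappa}^{d_1}\times\mathbb{A}_{\kappa}^{d_3}$, and $\mathbb{A}_{\kappa}^{d_1}$ is the fibre over the origin of the limit map $\{y=0\}\to\mathbb{A}_{\kappa}^{d_3}$.

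Next I would pick a $\kappa^{\times}$-equivariant embedded resolution $h\colon Y\to\mathbb{A}_{\kappa}^d$ of $f^{-1}(0)$ (equivariant resolution of singularities). Writing $\sum_iN_iE_i=\operatorname{div}(f\circ h)$, the action lifts to $Y$ and, being connected, fixes every $E_i$; hence it acts on each cover $\widetilde{E_I^{\circ}}\to E_I^{\circ}$, $m_I=\gcd_{i\in I}N_i$, compatibly with the $\mu_{m_I}$-action. The Denef--Loeser formula \cite{DL1,DL2}, in the form relative to $\mathbb{A}_{\kappa}^{d_1}\subset f^{-1}(0)$, gives
\[
\big[\mathcal{S}_{f}^{\mathbb{A}^{d_1}}\big]=\sum_{\varnothing\ne I}(1-[\mathbb{A}^1])^{|I|-1}\,\big[\widetilde{E_I^{\circ}}\cap h^{-1}(\mathbb{A}_{\kappa}^{d_1})\big],
\]
while $\mathcal{S}_{g,0}$ is computed by the same formula applied to $h$ restricted over the fixed locus $Y^{\kappa^{\times}}$, which --- after discarding the components of $Y^{\kappa^{\times}}$ that do not dominate $\mathbb{A}_{\kappa}^{d_3}$ --- one checks to be an embedded resolution of $g^{-1}(0)\subset\mathbb{A}_{\kappa}^{d_3}$ carrying the induced multiplicities (the Jacobian orders being corrected by the equivariant normal bundle of $Y^{\kappa^{\times}}$ in $Y$). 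Comparing these two expansions term by term is the heart of the proof.

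The crux is to show, for each nonempty $I$, that $\big[\widetilde{E_I^{\circ}}\cap h^{-1}(\mathbb{A}_{\kappa}^{d_1})\big]$ equals $[\mathbb{A}^1]^{d_1}$ times the corresponding term of the resolution of $g$ over $0\in\mathbb{A}_{\kappa}^{d_3}$, as classes in $\mathscr{M}_{\kappa}^{\hat\mu}$. I would use the $\kappa^{\times}$-action on $Y$, the Bialynicki--Birula stratifications of both $\mathbb{A}_{\kappa}^d$ and $Y$, and the properness of $h$ to stratify $\widetilde{E_I^{\circ}}\cap h^{-1}(\mathbb{A}_{\kappa}^{d_1})$ into locally closed pieces that fibre, by affine-space bundles, over locally closed pieces of the matching cover over $Y^{\kappa^{\times}}$. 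The numerical input is that the $+1$-weight (attracting) normal directions lying over $0\in\mathbb{A}_{\kappa}^{d_3}$ account for affine bundles of total rank exactly $d_1$, whereas the $-1$-weight (repelling) directions, the part lying over $\mathbb{A}_{\kappa}^{d_1}\smallsetminus\{0\}$, and the fixed components of $Y$ that do not dominate $\mathbb{A}_{\kappa}^{d_3}$ all cancel once the coefficients $(1-[\mathbb{A}^1])^{|I|-1}$ are reinstated and summed over $I$ --- a cancellation of the same type as those already present inside the Denef--Loeser expansion of a single motivic Milnor fiber. Summing over $I$ then yields $\big[\mathcal{S}_{f}^{\mathbb{A}^{d_1}}\big]=[\mathbb{A}^1]^{d_1}\,\mathcal{S}_{g,0}$ and hence the conjecture.

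The step I expect to be by far the hardest is this last one: analysing the Bialynicki--Birula strata of $h^{-1}(\mathbb{A}_{\kappa}^{d_1})$ finely enough to make the cancellation explicit and uniform in $I$, while keeping every identification $\hat\mu$-equivariant. Equivariance is indispensable here, since the target is the full motivic ring $\mathscr{M}_{\kappa}^{\hat\mu}$ and not merely its $\ell$-adic or Hodge realization --- in the latter one could instead conclude through a K\"unneth/comparison argument for nearby cycles, but that argument only sees the realization and would leave the class in $\mathscr{M}_{\kappa}^{\hat\mu}$ undetermined. A variant that avoids resolution altogether is to expand both sides as limits of suitably renormalized classes of truncated arc spaces $\mathcal{L}_n(\,\cdot\,)$ and to exhibit an explicit piecewise-affine $[\mathbb{A}^1]^{d_1}$-fibration between the defining constructible sets, read off directly from the shape $f=g(z)+\sum_{i,j}x_iy_j\,h_{ij}$; the difficulty is unchanged, namely to see that the $x$-coordinates of an arc decouple freely once the $t$-order of $f$ along it has been pinned down.
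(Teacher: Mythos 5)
You should first note that the statement you are addressing is presented in the paper only as a \emph{conjecture}: the paper never proves it. What the paper actually establishes (Theorem \ref{maintheorem}) is the $\ell$-adic realization of the identity, for $f$ a polynomial over an algebraically closed field, and it does so by exactly the K\"unneth-plus-comparison-theorem route that you correctly set aside as insufficient for the class in $\mathscr M_{\kappa}^{\hat\mu}$. So you are attempting something strictly stronger than anything demonstrated in the text, and there is no proof in the paper to measure yours against.

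Judged on its own terms, your proposal has a genuine gap: it is a programme rather than a proof, and the step you yourself flag as ``by far the hardest'' is precisely where all the mathematical content lives. Concretely: (1) you assert, but do not verify, that the fixed locus $Y^{\kappa^{\times}}$ of an equivariant resolution, after discarding components not dominating $\mathbb A_{\kappa}^{d_3}$, is an embedded resolution of $g^{-1}(0)$ with multiplicities and discrepancies ``corrected by the equivariant normal bundle''---this is a nontrivial claim, since the fixed locus can meet the exceptional divisors in complicated ways and the induced log-resolution data are not automatic; (2) the central cancellation---that the attracting directions over $0\in\mathbb A_{\kappa}^{d_3}$ contribute affine bundles of total rank exactly $d_1$ while the repelling directions, the strata over $\mathbb A_{\kappa}^{d_1}\setminus\{0\}$, and the non-dominating fixed components all cancel after reinstating the factors $(1-[\mathbb A^1])^{|I|-1}$---is stated as an expectation, with no mechanism exhibited for why the classes (as $\hat\mu$-equivariant classes, not Euler characteristics) should cancel rather than merely their realizations; and (3) the conjecture is stated for $f\in\kappa[[x,y,z]]$, whereas your very first reduction (Nicaise--Sebag comparison with the Denef--Loeser Milnor fiber) already assumes $f$ is a polynomial. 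Your diagnosis of \emph{why} the paper's method cannot reach $\mathscr M_{\kappa}^{\hat\mu}$ is accurate and worth keeping, but as it stands the proposal does not constitute a proof of the conjecture.
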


Notice that we proved  in \cite{Thuong} the {\it regular} version for a composition with a polynomial in two variables and for a function of Steenbrink type. The purpose of the present article is to show that the $\ell$-adic version of the integral identity conjecture holds if the series $f$ is a polynomial and the ground field $\kappa$ is an algebraically closed field of characteristic zero. Let $R\psi$ denote the nearby cycles functor. This functor was defined earlier in \cite{Ber2,Ber} and it will be recalled here in Subsection \ref{ncf}. 

\begin{theorem}\label{maintheorem}
Let $\kappa$ be an algebraically closed field. If $f$ is in $\kappa[x,y,z]$ invariant by the $\kappa^{\times}$-action of weight $(1,-1, 0)$ with $f(0,0,0)=0$, there is a canonical quasi-isomorphism of complexes: $R\Gamma_c(\mathbb A_{\kappa}^{d_1},R\psi_{\hat{f}}\mathbb Q_{\ell}|_{\mathbb A_{\kappa}^{d_1}})\stackrel{qis}{\to} R\Gamma_c(\mathbb A_{\kappa}^{d_1},\mathbb Q_{\ell}){\otimes} (R\psi_{\hat{f}_{\mathfrak Z}}\mathbb Q_{\ell})_0$.
\end{theorem}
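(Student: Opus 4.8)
The plan is to run a torus-equivariant degeneration of $f$ and let Berkovich's nearby-cycles machinery do the remaining analytic work. For $s\in\mathbb A^1_\kappa$ put $f_s(x,y,z)=f(sx,y,z)$. Since $f$ is invariant under the weight $(1,-1,0)$ action one has $f_s(x,y,z)=f(x,sy,z)$, hence $f_1=f$, $f_0(x,y,z)=f(0,0,z)$, and for $s\neq0$ the automorphism $(x,y,z)\mapsto(sx,y,z)$ of $\mathbb A^d_\kappa$ preserves $\mathbb A^{d_1}_\kappa$ and carries $\hat f_s$ to $\hat f$; therefore $R\Gamma_c(\mathbb A^{d_1}_\kappa,R\psi_{\hat f_s}\mathbb Q_\ell)$ is, canonically and $\hat\mu$-equivariantly, independent of $s\in\mathbb G_m$ and equals the left-hand side of the theorem. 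At the degenerate end, since $f_0$ involves only $z$ the structural morphism $\hat f_0$ of $\mathfrak X$ factors through the natural projection $\mathfrak X\to\mathfrak Z$, which is smooth with constant reduction $\mathbb A^{d_1}_\kappa\to\operatorname{Spec}\kappa$; by smooth base change for the formal nearby-cycles functor, $R\psi_{\hat f_0}\mathbb Q_\ell$ is the constant complex on $\mathbb A^{d_1}_\kappa$ with value $(R\psi_{\hat f_{\mathfrak Z}}\mathbb Q_\ell)_0$, so that $R\Gamma_c(\mathbb A^{d_1}_\kappa,R\psi_{\hat f_0}\mathbb Q_\ell)\simeq R\Gamma_c(\mathbb A^{d_1}_\kappa,\mathbb Q_\ell)\otimes(R\psi_{\hat f_{\mathfrak Z}}\mathbb Q_\ell)_0$ — the right-hand side. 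The theorem is thereby reduced to showing that $R\Gamma_c(\mathbb A^{d_1}_\kappa,R\psi_{\hat f_s}\mathbb Q_\ell)$ does not change as $s\to0$.

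For this I would pass to Berkovich analytic spaces. By the comparison theorem for nearby cycles, $R\Gamma_c(\mathbb A^{d_1}_\kappa,R\psi_{\hat f_s}\mathbb Q_\ell)$ is identified with the compactly supported cohomology of the generic fibre $\mathcal Y_s$ of the formal scheme $(\mathfrak X,\hat f_s)$ over $K=\kappa((t))$ — concretely, of $\{(\xi,\upsilon,\zeta):|\xi_i|\le1,\ |\upsilon_j|<1,\ |\zeta_k|<1,\ f(s\xi,\upsilon,\zeta)=t\}$ — so one wants $R\Gamma_c(\mathcal Y_s,\mathbb Q_\ell)$ to be constant in $s$. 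The idea is to resolve the family of pairs $(\mathfrak X,\mathfrak X_0)$ over $\mathbb A^1_s$ in the style of Temkin, arranged compatibly with the torus action and with the closed subscheme $\{y=0\}$, so that the functions $F(s,x,y,z)=f(sx,y,z)=f(0,0,z)+s\cdot(\text{an element of the ideal generated by the }x_iy_j)$ become normal crossings uniformly in $s$. On such a model $\mathcal Y_s$ decomposes, stratum by stratum, into explicit products of open discs, closed discs, open annuli and the Kummer covers carrying the monodromy, and the Künneth isomorphism for $\ell$-adic cohomology with compact supports of Berkovich spaces computes each stratum's contribution; tracking how the components of the model meet the fibre $s=0$, one checks these contributions are independent of $s$, with the correct Tate twists and $\hat\mu$-action. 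Pasting the strata produces a canonical, $\hat\mu$-equivariant quasi-isomorphism $R\Gamma_c(\mathcal Y_1,\mathbb Q_\ell)\simeq R\Gamma_c(\mathcal Y_0,\mathbb Q_\ell)$, which together with the first paragraph yields the statement.

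The main obstacle is exactly this middle step. The torus does not act on the analytic tube $\mathcal Y_s$ — its contracting direction drives points outside the polydisc on which the relevant series converge — so one cannot localise to the fixed locus and must instead build the right resolution by hand, simultaneously respecting the family over $\mathbb A^1_s$, the torus action, and the locus $\{y=0\}$; and one must then carry out the Künneth bookkeeping carefully enough that the $y$-directions — topologically nilpotent and cut down by the single relation $\hat f=t$ — contribute, jointly with the closed $x$-polydisc, precisely one affine $d_1$-space's worth of cohomology and nothing more. It is worth stressing that the nearby-cycles sheaf itself genuinely jumps at $s=0$ (the critical locus of $F$ shrinks in the $s$-direction), so the whole force of the argument lies in showing that only the compactly supported cohomology over $\mathbb A^{d_1}_\kappa$ is insensitive to this jump — and it is the comparison theorem together with the compactly supported Künneth formula that delivers that insensitivity.
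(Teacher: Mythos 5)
Your first paragraph is a correct and clean reformulation: setting $f_s(x,y,z)=f(sx,y,z)=f(x,sy,z)$, the $s\neq 0$ fibres are all equivalent to $\hat f$ via the automorphism $x\mapsto sx$ (which fixes $\mathbb A^{d_1}_\kappa$), and at $s=0$ the potential is $f(0,0,z)$ so the nearby-cycles complex restricted to $\mathbb A^{d_1}_\kappa$ is constant with stalk $(R\psi_{\hat f_{\mathfrak Z}}\mathbb Q_\ell)_0$. But this reformulation is exactly equivalent to the theorem, so it buys no ground: all the content still sits in ``constancy in $s$''. That content is what the second paragraph declines to supply. You propose a Temkin-style resolution of the pair $(\mathfrak X,\mathfrak X_0)$ over $\mathbb A^1_s$, simultaneously compatible with the $s$-family, the $\mathbb G_m$-action, and the hyperplane $\{y=0\}$, followed by a stratum-by-stratum K\"unneth bookkeeping, with the phrase ``one checks these contributions are independent of $s$'' doing all the work. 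This is not a proof — it is a promissory note, and the most delicate point (why compactly supported cohomology over $\mathbb A^{d_1}_\kappa$ is insensitive to the genuine jump in the nearby-cycles sheaf at $s=0$) is precisely the point left unverified. It is also not clear that such a uniformly equivariant and $\{y=0\}$-adapted log resolution exists over the whole $s$-line without significant extra work, nor that the monodromy/Kummer contributions really cancel stratum by stratum; these are nontrivial claims that need to be established, not assumed.

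The paper's argument avoids the deformation in $s$ and the resolution entirely, and the mechanism it uses is exactly the one you flag as unavailable. After projectivizing $f$ (so that properness makes Corollary~\ref{cormain} applicable) and shrinking to the tube $A_{\gamma,\epsilon}=\{|x|\le\gamma^{-1},\,|y|\le\gamma\epsilon,\,|z|\le\epsilon,\,f=t\}$, it stratifies by $|x||y|=0$ versus $|x||y|\neq 0$. On the closed stratum $X^0_{\gamma,\epsilon}$ the defining equation involves only $z$, the space is literally a product $Y^0_{\gamma,\epsilon}\times D_\epsilon$, and the compactly-supported K\"unneth isomorphism gives the tensor decomposition directly; a short computation (Lemma~\ref{lem5.1}) identifies $R\Gamma_c(Y^0_{\gamma,\epsilon},\mathbb Q_\ell)$ with $R\Gamma_c(\mathbb A^{d_1}_\kappa,\mathbb Q_\ell)$. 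On the open stratum $A^1_{\gamma,\epsilon}$ (where $|x||y|\neq 0$), it is true — as you observe — that $\mathbb G_{m}^{\mathrm{an}}$ does not act on the tube; but its orbits do intersect $A^1_{\gamma,\epsilon}$ in closed annuli and $B^1_{\gamma,\epsilon}=A^1_{\gamma,\epsilon}\setminus X^1_{\gamma,\epsilon}$ in thin boundary annuli, and fibring over the orbit space $\mathcal P$ and applying the Leray spectral sequence with weak base change shows that $R\Gamma_{X^1}(A^1,\mathbb Q_\ell)$ vanishes fibrewise. This is the step your outline replaces with an unbuilt resolution; it is the one idea your proposal is missing, and supplying it is where the proof actually happens.
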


As an approach, we follow Kontsevich-Soibelman's idea in \cite[Prop. 9]{KS} using Berkovich spaces. The fundamental tools are the comparison theorem for nearby cycles and the K\"{u}nneth isomorphism for \'etale cohomology with compact support. 

The result in this article is part of the author's thesis. He thanks his advisor Fran\c cois Loeser for such an interesting subject as well as many valuable suggestions and much patience. He thanks Vladimir Berkovich and Antoine Ducros for their answers to questions on Berkovich spaces. Especially, Ducros read carefully the earlier drafts of the manuscript and pointed out a serious mistake, so that the author can introduce this complete version.

\section{Preliminaries on the Berkovich spaces}\label{sec2}

\subsection{Notation}
Let $K$ be a non-archimedean complete discretely valued field $K$ of equal characteristics zero, with valuation ring $R$, maximal ideal $\mathfrak m$ and residue field $\kappa=R/\mathfrak m$.

Let $\mathbb A_{K,\text{Ber}}^n$ be the $n$-dimensional $K$-analytic affine space, which is by definition the set $\mathcal M(K[T_1,\dots,T_n])$ of all multiplicative seminorms on the ring of polynomials $K[T_1,\dots,T_n]$ whose restriction to $K$ is bounded (see \cite{Ber0}). We define a norm on $K$ by $|\xi|:=c^{\text{val}(\xi)}$ with $c\in (0,1)$ fixed, and a norm on $\mathbb A_{K,\text{Ber}}^n$ by $|x|:=\max_{1\leq i\leq n}|x_i|$ for $x=(x_1,\dots,x_n)$. The subspace of $\mathbb A_{K,\text{Ber}}^n$ defined by $|x|\leq 1$ is called the $n$-dimensional unit closed disc and denoted by $E^n(0;1)$, while the corresponding open one is written as $D^n(0;1)$


\subsection{From special formal schemes to analytic spaces}\label{sec2.1}
Let us remark that the main result of this article will only concern formal $R$-schemes topologically of finite type. It is however better to recall some preliminaries on the Berkovich spaces in a larger category that consists of special formal $R$-schemes. 

A topological $R$-algebra $\mathcal A$ is said to be {\it special} if $\mathcal A$ is a Noetherian adic ring such that, if $\mathcal J$ is an ideal of definition of $\mathcal A$, the quotient rings $\mathcal A/\mathcal J^n$, $n\geq 1$, are finitely generated over $R$. By \cite{Ber}, a topological $R$-algebra $\mathcal A$ is special if and only if it is topologically $R$-isomorphic to a quotient of the special $R$-algebra $R\{T_1,\dots, T_n\}[[S_1,\dots,S_m]]$. An adic $R$-algebra $\mathcal A$ is {\it topologically finitely generated over} $R$ if it is topologically $R$-isomorphic to a quotient algebra of the algebra of restricted power series $R\{T_1,\dots,T_n\}$. Evidently, any topologically finitely generated $R$-algebra is a special $R$-algebra.  

A formal $R$-scheme $\mathfrak X$ is said to be {\it special} if $\mathfrak X$ is a separated Noetherian adic formal scheme and if it is a finite union of affine formal schemes of the form $\text{Spf}(\mathcal A)$ with $\mathcal A$ a special $R$-algebras. A formal $R$-scheme $\mathfrak X$ is {\it topologically of finite type} if it is a finite union of affine formal schemes of the form $\text{Spf}(\mathcal A)$ with $\mathcal A$ topologically finitely generated $R$-algebras. It is a fact that the category of separated topologically of finite type formal $R$-schemes is a full subcategory of the category of $R$-special formal schemes, and both admit fiber products. 

A morphism $\varphi: \mathfrak{Y}\to\mathfrak{X}$ of special formal schemes is of {\it locally finite type} if locally it is isomorphic to a morphism of the form $\text{Spf} (\mathcal B)\to \text{Spf} (\mathcal A)$ with $\mathcal B$ topologically finitely generated over $\mathcal A$. The morphism $\varphi$ is of {\it finite type} if it is a quasicompact morphism of locally finite type.

Due to \cite{Ber}, there is a canonical functor $\mathfrak X \mapsto \mathfrak X_{\eta}$ from the category of special formal $R$-schemes to that of (Berkovich) $K$-analytic spaces. In the affine case, the interpretation of this functor is explicit. Namely, if 
$$\mathfrak{X}=\text{Spf} \Big(R\{T_1,\dots,T_n\}[[S_1,\dots,S_m]]\Big),$$
one has
\begin{align*}
\mathfrak X_{\eta}
=E^n(0;1)\times D^m(0;1).
\end{align*}
Also, if $\mathfrak{X}=\text{Spf} (\mathcal A)$, where $\mathcal A$ is a quotient of $R\{T_1,\dots,T_n\}[[S_1,\dots,S_m]]$ by an ideal $\mathcal I$, then 
$\mathfrak{X}_{\eta}$ is the closed $K$-analytic subspace of $X=E^n(0;1)\times D^m(0;1)$ defined by the subsheaf of ideals $\mathcal I\mathscr{O}_{X}$. 

Generally, $\mathfrak X_{\eta}$ is defined by glueing in an appropriate manner of analytic spaces corresponding to affine formal schemes which covers $\mathfrak X$ (see \cite{Ber}).

\begin{remark}\label{rk1}
(i) The functor $\mathfrak{X}\mapsto\mathfrak{X}_{\eta}$ takes a formal scheme topologically of finite type to a paracompact analytic space, and this functor commutes with fiber products.

(ii) The functor $\mathfrak{X}\mapsto\mathfrak{X}_{\eta}$ takes a morphism of finite type $\varphi:\mathfrak{Y}\to\mathfrak{X}$ to a compact morphism of $K$-analytic spaces $\varphi_{\eta}:\mathfrak{Y}_{\eta}\to\mathfrak{X}_{\eta}$. If $\varphi$ is finite (resp. flat finite), so is $\varphi_{\eta}$.
\end{remark}

\subsection{The reduction map}
For a special formal $R$-scheme $\mathfrak X$, we denote by $\mathfrak X_0$ the closed subscheme of $\mathfrak X$ defined by the largest ideal of definition of $\mathfrak X$. Note that $\mathfrak X_0$ is a reduced Noetherian scheme, that the correspondence $\mathfrak X\mapsto\mathfrak X_0$ is functorial, and that the natural closed immersion $\mathfrak X_0\to \mathfrak X$ is a homeomorphism. Moreover, the {\it reduction} $\mathfrak X_0$ is also a separated $\kappa$-scheme of finite type.

We now recall the construction of the reduction map in the affine case, that is for $\mathfrak{X}=\text{Spf}(\mathcal A)$ with $\mathcal A$ being an adic special $R$-algebra. Notice that Berkovich did this work in \cite{Ber2, Ber} for any special formal $R$-scheme. The construction of the reduction map $\pi: \mathfrak{X}_{\eta}\to\mathfrak{X}_0$ for $\mathfrak{X}=\text{Spf}(\mathcal A)$ runs as follows. Remark that each point $x$ of $\mathfrak{X}_{\eta}$ 
defines a continuous character $\chi_x: \mathcal A\to \mathscr{H}(x)$. In its turn, $\chi_x$ defines a character $\widetilde{\chi}_x:\mathcal A_0=\mathcal A/\mathcal J \to\widetilde{\mathscr{H}(x)}$, where $\mathcal J$ is the largest ideal of definition of $\mathcal A$. Then we assign $\pi(x)$ to the kernel of $\widetilde{\chi}_x$, which is a prime ideal of $\mathcal A_0$. This definition guarantees the compatibility of the reduction map with open immersion in the following meaning. If $\mathfrak Y$ is an open formal scheme of $\mathfrak X$, then the reduction maps for $\mathfrak X$ and $\mathfrak Y$ are compatible and $\mathfrak Y_{\eta}\cong \pi^{-1}(\mathfrak Y_0)$.

\subsection{\'Etale cohomology of analytic spaces}
The theory of \'etale cohomology for Berkovich spaces (also called non-archimedean analytic spaces) is sharply developed in the long article \cite{Ber1}. Note that the groups $H^*(Y,\mathbb Z_{\ell})$ and $H^*(Y,\mathbb Q_{\ell})$ in the sense of derived functors are irrelevant, i.e. roughly speaking, they do not satisfy some ``nice" properties which a cohomology theory should have. Grothendieck however pointed out that the following groups are relevant
$$\projlim H^*(Y,\mathbb Z/\ell^n\mathbb Z)\quad \text{and}\quad (\projlim H^*(Y,\mathbb Z/\ell^n\mathbb Z))\otimes_{\mathbb Z_{\ell}} \mathbb Q_{\ell}.$$
Thus from now on, we shall only consider these groups and denote them by $H^*(Y,\mathbb Z_{\ell})$ and $H^*(Y,\mathbb Q_{\ell})$, respectively (cf. \cite{Ducros}, \cite{NS}). The same also holds for cohomology with compact support (cf. \cite{Ducros}, \cite{HL}). Namely, 
\begin{align*}
H_c^*(Y,\mathbb Z_{\ell})&:= (\projlim H_c^*(Y,\mathbb Z/\ell^n\mathbb Z)),\\
H_c^*(Y,\mathbb Q_{\ell})&:= (\projlim H_c^*(Y,\mathbb Z/\ell^n\mathbb Z))\otimes_{\mathbb Z_{\ell}} \mathbb Q_{\ell}.
\end{align*}

Let $\widehat{K^s}$ be the completion of a separable closure of $K$. For a $K$-analytic space $X$, there is a canonical morphism $b: \overline{X}:=X\widehat{\otimes}_K\widehat{K^s}\to X$. Now fix such an $X$ and consider all the subspaces of its. If $Y$ is an analytic subspace of the $X$, denote by $\overline{Y}$ or by $Y\widehat{\otimes}_K\widehat{K^s}$ the preimage of $Y$ in $\overline{X}$ under $b$. The following are two of properties of the functor $Y\mapsto H_c^*(\overline{Y},\mathbb Q_{\ell})$ according to \cite[Prop. 5.2.6, Cor. 7.7.3]{Ber1}.

\begin{proposition}[Berkovich \cite{Ber1}]\label{prop2.2}
Let $Y$, $Y'$ be locally closed analytic subspaces of a given $K$-analytic space $X$.
\begin{itemize}
\item[(i)] If $U$ is an open subspace of $Y$, $V:=Y\setminus U$, there is an exact sequence 
\begin{align*}
\cdots\to H_c^m(\overline{V},\mathbb Q_{\ell})\to H_c^{m+1}(\overline{U},\mathbb Q_{\ell})\to H_c^{m+1}(\overline{Y},\mathbb Q_{\ell})\to H_c^{m+1}(\overline{V},\mathbb Q_{\ell})\to \cdots.
\end{align*}
\item[(ii)] There is a canonical K\"{u}nneth isomorphism of complexes
$$R\Gamma_c(\overline{Y},\mathbb Q_{\ell})\otimes R\Gamma_c(\overline{Y'},\mathbb Q_{\ell})\cong R\Gamma_c(\overline{Y\times Y'},\mathbb Q_{\ell}).$$
\end{itemize}
\end{proposition}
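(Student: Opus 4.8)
The approach I would take follows the outline of Kontsevich--Soibelman in \cite[Prop.~9]{KS}: transport both sides to Berkovich analytic spaces by means of the comparison theorem for nearby cycles, and then produce the product decomposition from the weight-$(1,-1,0)$ torus action together with the K\"unneth isomorphism of Proposition~\ref{prop2.2}(ii). Concretely, let $\mathfrak X_\eta$ and $\mathfrak Z_\eta$ denote the Berkovich generic fibers, let $\pi\colon\mathfrak X_\eta\to\mathfrak X_0$ and $\pi_{\mathfrak Z}\colon\mathfrak Z_\eta\to\mathfrak Z_0$ be the reduction maps, and set $W:=\pi^{-1}(\mathbb A_\kappa^{d_1})$, a locally closed analytic subspace of a paracompact $K$-analytic space. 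The comparison theorem for the formal nearby cycles functor $R\psi$ recalled in Subsection~\ref{ncf}, together with its compatibility with restriction to locally closed subschemes of the reduction, identifies canonically
\[
R\Gamma_c\bigl(\mathbb A_\kappa^{d_1},R\psi_{\hat f}\mathbb Q_\ell|_{\mathbb A_\kappa^{d_1}}\bigr)\simeq R\Gamma_c(\overline W,\mathbb Q_\ell)\qquad\text{and}\qquad (R\psi_{\hat f_{\mathfrak Z}}\mathbb Q_\ell)_0\simeq R\Gamma(\overline{\mathfrak Z_\eta},\mathbb Q_\ell),
\]
the latter being the cohomology of the tube of the closed point $0\in\mathfrak Z_0$. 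Thus it suffices to construct a canonical quasi-isomorphism $R\Gamma_c(\overline W,\mathbb Q_\ell)\stackrel{qis}{\to}R\Gamma_c(\mathbb A_\kappa^{d_1},\mathbb Q_\ell)\otimes(R\psi_{\hat f_{\mathfrak Z}}\mathbb Q_\ell)_0$.

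Next I would unpack what the invariance hypothesis gives. Every monomial $x^ay^bz^c$ occurring in $f$ has $|a|=|b|$; in particular $f(x,0,z)=f(0,y,z)=f(0,0,z)$ and $f-f(0,0,z)$ lies in the ideal generated by the products $x_iy_j$. Moreover the $\kappa^\times$-action extends to an action of $\mathbb G_{m,\kappa}$, and hence of its analytification, on $\mathfrak X$ over $\mathrm{Spf}(R)$ which commutes with $\hat f$; it induces on $\mathfrak X_0$, and in particular on the subscheme $\mathbb A_\kappa^{d_1}$, the linear scaling action, and therefore on $W$ an action lying over $\mathbb A_\kappa^{d_1}$ through $\pi$. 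Consequently the analytic Milnor fibers $\pi^{-1}(\mathbf x)$ are canonically isomorphic along $\mathbb G_m$-orbits, and $R\psi_{\hat f}\mathbb Q_\ell|_{\mathbb A_\kappa^{d_1}}$ is a $\mathbb G_m$-equivariant complex for the scaling action.

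The core of the argument is then the computation of $R\Gamma_c(\overline W,\mathbb Q_\ell)$. I would stratify $\mathbb A_\kappa^{d_1}=\bigsqcup_{I\subseteq\{1,\dots,d_1\}}S_I$ by the coordinate strata $S_I=\{x:x_i=0\Leftrightarrow i\notin I\}\cong\mathbb G_m^{I}$, pull this back through $\pi$ to a stratification $W=\bigsqcup_I W_I$ with $W_I:=\pi^{-1}(S_I)$ by locally closed analytic subspaces, and iterate the excision exact sequence of Proposition~\ref{prop2.2}(i) to present $R\Gamma_c(\overline W,\mathbb Q_\ell)$ as an iterated extension of the complexes $R\Gamma_c(\overline{W_I},\mathbb Q_\ell)$ with explicit connecting morphisms. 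On a given stratum the identities $f|_{x=0}=f|_{y=0}=f(0,0,z)$ together with the $\mathbb G_m$-action separate the $z$-coordinates from the $x$- and $y$-coordinates, so that the K\"unneth isomorphism of Proposition~\ref{prop2.2}(ii) factors the contribution of $W_I$ into a $z$-part reproducing $\overline{\mathfrak Z_\eta}$ and an $(x,y)$-part coming from $S_I$ and from polydiscs. The point to verify is then that, after applying $R\Gamma_c$, the assembly of these contributions over all $I$, together with its connecting maps, matches the iterated extension of the $R\Gamma_c(S_I,\mathbb Q_\ell)$ that computes $R\Gamma_c(\mathbb A_\kappa^{d_1},\mathbb Q_\ell)$, each tensored with $(R\psi_{\hat f_{\mathfrak Z}}\mathbb Q_\ell)_0$; since every map in sight (comparison, excision, K\"unneth) is canonical, the resulting quasi-isomorphism is canonical as well.

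I expect the main obstacle to be exactly this last matching, and most acutely its instance over the closed stratum $I=\emptyset$: there $W_\emptyset=\pi^{-1}(0)$ is the full analytic Milnor fiber of $f$ at the origin of $\mathbb A_\kappa^d$, which by the Thom--Sebastiani principle carries extra cohomology coming from the $(x,y)$-variables, and one must show that upon forming $R\Gamma_c$, and compatibly with the connecting morphisms linking it to the neighbouring strata, this extra cohomology is precisely cancelled, so that the origin ultimately contributes only $\mathbb Q_\ell(-d_1)[-2d_1]\otimes(R\psi_{\hat f_{\mathfrak Z}}\mathbb Q_\ell)_0$. This is the step where it is indispensable that the action carries weights of \emph{both} signs on the $(x,y)$-variables --- positive on the $x_i$, negative on the $y_j$ --- it is this hyperbolicity that forces the $(x,y)$-contribution of the nearby cycles to collapse. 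In parallel, one has to keep the Berkovich formalism under tight control: the anti-continuity of the reduction map, so that the $W_I$ are genuinely locally closed analytic subspaces of the expected kind; the base change to $\widehat{K^s}$ throughout; the contrasting behaviour of open and closed polydiscs under $R\Gamma_c$; and the functoriality needed to keep all identifications canonical.
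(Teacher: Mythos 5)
Your proposal does not prove the statement it is attached to. Proposition~\ref{prop2.2} is a foundational result about the \'etale cohomology with compact support of Berkovich analytic spaces: part (i) is the excision long exact sequence associated to the decomposition of a locally closed analytic subspace $Y$ into an open piece $U$ and its closed complement $V$, and part (ii) is the K\"unneth isomorphism for $R\Gamma_c$ of a product. In the paper this is a quoted result of Berkovich (\cite[Prop.~5.2.6, Cor.~7.7.3]{Ber1}); a proof would have to proceed at the level of the \'etale site of analytic spaces, e.g.\ for (i) from the short exact sequence of sheaves $0\to j_!(F|_U)\to F\to i_*(F|_V)\to 0$ and the behaviour of $R\Gamma_c$ on it, and for (ii) from Berkovich's K\"unneth theorem, followed in both cases by passage to the limit over $\mathbb Z/\ell^n\mathbb Z$ and tensoring with $\mathbb Q_\ell$, since $H_c^*(-,\mathbb Q_\ell)$ is defined here as $(\varprojlim H_c^*(-,\mathbb Z/\ell^n\mathbb Z))\otimes_{\mathbb Z_\ell}\mathbb Q_\ell$ rather than as a derived functor. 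None of this appears in your text.

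What you have written instead is an outline of the proof of the Main Theorem (Theorem~\ref{maintheorem}), the integral identity itself: comparison theorem, reduction to the Berkovich generic fiber, stratification by the torus action, collapse of the hyperbolic $(x,y)$-directions. That argument explicitly invokes Proposition~\ref{prop2.2}(i) and (ii) as tools, so read as a proof of Proposition~\ref{prop2.2} it is circular, and read on its own terms it is aimed at a different (and much stronger) statement. You need either to supply the actual sheaf-theoretic argument for the excision sequence and the K\"unneth isomorphism in the analytic setting, or, as the paper does, to cite Berkovich's results directly.
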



\subsection{The nearby cycles functor}\label{ncf}
A morphism $\varphi: \mathfrak{Y}\to\mathfrak{X}$ of special formal $R$-schemes is called {\it \'etale} if for any ideal of definition $\mathcal{J}$ of $\mathfrak{X}$ the morphism of schemes $(\mathfrak{Y},\mathscr{O}_{\mathfrak{Y}}/\mathcal{J}\mathscr{O}_{\mathfrak{Y}})\to (\mathfrak{X},\mathscr{O}_{\mathfrak{X}}/\mathcal{J})$ is \'etale. The reduction $\mathfrak X_0$ being the closed subscheme of $\mathfrak X$ defined by the largest ideal of definition of $\mathfrak X$, thus if the morphism $\varphi: \mathfrak{Y}\to\mathfrak{X}$ is \'etale, the induced morphism $\varphi_0: \mathfrak{Y}_0\to\mathfrak{X}_0$ is \'etale. 

By \cite{Ber1}, a morphism of $K$-analytic spaces $\varphi : Y \to X$ is {\it \'etale} if for each point $y\in Y$ there exist open neighborhoods $V$ of $y$ and $U$ of $\varphi(y)$ such that $\varphi$ induces a finite \'etale morphism $\varphi: V \to U$. By a finite \'etale morphism $\varphi: V \to U$ one means that for each affinoid domain $W = \mathcal M(\mathcal A)$ in $U$, the preimage $\varphi^{-1}(W) =\mathcal M(\mathcal B)$ is an affinoid domain and $\mathcal B$ is a finite \'etale $\mathcal A$-algebra. A morphism of $K$-analytic spaces $\varphi: Y\to X$ is called {\it quasi-\'etale} if for any point $y\in Y$ there exist affinoid domains $V_1,\dots,V_n\subset Y$ such that $V_1\cup\dots\cup V_n$ is a neighborhood of $y$ and each $V_i$ may be identified with an affinoid domain in a $K$-affinoid space \'etale over $X$. By definition, \'etale morphisms are also quasi-\'etale. 

\begin{lemma}[Berkovich \cite{Ber}, Prop. 2.1]\label{lem2.2}
Assume that $\varphi: \mathfrak{Y}\to \mathfrak{X}$ is an \'etale morphism of special formal $R$-schemes. Then the following hold:
\begin{itemize}
  \item[(i)] $\varphi_{\eta}(\mathfrak{Y}_{\eta})=\pi^{-1}(\varphi_0(\mathfrak{Y}_0))$, consequently $\varphi_{\eta}(\mathfrak{Y}_{\eta})$ is a closed analytic domain in $\mathfrak{X}_{\eta}$.
  \item[(ii)] The induced morphism $\frak{Y}_{\eta}\to\frak{X}_{\eta}$ of $K$-analytic spaces is quasi-\'etale.
\end{itemize}
\end{lemma}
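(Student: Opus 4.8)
The plan is to reduce to the affine situation, invoke the local structure theorem for \'etale morphisms of adic Noetherian formal schemes, and then read off both conclusions on generic fibres from the explicit descriptions of the functor $\mathfrak{X}\mapsto\mathfrak{X}_\eta$ and of the reduction map recalled above. One inclusion in (i) is immediate from functoriality: since the formation of $\mathfrak{X}_\eta$, of $\mathfrak{X}_0$ and of the reduction map are all functorial, $\pi_{\mathfrak{X}}\circ\varphi_\eta=\varphi_0\circ\pi_{\mathfrak{Y}}$, hence $\varphi_\eta(\mathfrak{Y}_\eta)\subseteq\pi_{\mathfrak{X}}^{-1}(\varphi_0(\mathfrak{Y}_0))$. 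Both assertions being local on $\mathfrak{X}$, with (ii) moreover local on $\mathfrak{Y}$ and the set-theoretic equality in (i) gluing once it is known affine-locally over $\mathfrak{X}$, I may assume $\mathfrak{X}=\text{Spf}(\mathcal{A})$ and, after shrinking $\mathfrak{Y}$, that $\mathfrak{Y}$ is an open formal subscheme of $\text{Spf}(\mathcal{C})$ with $\mathcal{C}=\mathcal{A}\{T\}/(P(T))$, $P$ monic and $P'(T)$ invertible in $\mathcal{C}$; this standard local form is obtained from the scheme-theoretic structure theorem applied to the truncations $\mathcal{A}/\mathcal{J}^{n}$ together with passage to the limit.

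For the reverse inclusion in (i), I would take $x\in\mathfrak{X}_\eta$ with $\mathfrak{p}:=\pi_{\mathfrak{X}}(x)\in\varphi_0(\mathfrak{Y}_0)$ and lift it. The point $x$ is a bounded multiplicative character $\chi_x\colon\mathcal{A}\to\mathscr{H}(x)$, landing in the valuation ring $\mathscr{H}(x)^{\circ}$, with reduction $\widetilde{\chi}_x\colon\mathcal{A}_0\to\widetilde{\mathscr{H}(x)}$ of kernel $\mathfrak{p}$. The fibre of $\text{Spf}(\mathcal{C})_\eta\to\mathfrak{X}_\eta$ over $x$ consists of the roots of $P(T)$ in the closed unit disc over $\mathscr{H}(x)$; since $\mathfrak{p}$ lies in the image of the \'etale morphism $\mathfrak{Y}_0\to\mathfrak{X}_0$, the reduced polynomial $\overline{P}\in\kappa(\mathfrak{p})[T]$ has a simple root in a finite separable extension of $\kappa(\mathfrak{p})$. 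Enlarging $\mathscr{H}(x)$ to a finite extension $L$ whose residue field contains that extension, Hensel's lemma in the complete valuation ring $L^{\circ}$ lifts the root to $t\in L^{\circ}$ with $P(t)=0$, and the character $\mathcal{C}\to L^{\circ}$, $T\mapsto t$, extending $\chi_x$ defines a point $y$ of $\text{Spf}(\mathcal{C})_\eta$ with $\varphi_\eta(y)=x$; its reduction is the chosen root, which lies in the open subscheme $\mathfrak{Y}_0$, so $y\in\mathfrak{Y}_\eta=\pi^{-1}(\mathfrak{Y}_0)$. This gives the equality in (i).

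To conclude (i) I would argue that $\pi_{\mathfrak{X}}^{-1}(\varphi_0(\mathfrak{Y}_0))$ is a closed analytic domain: $\varphi_0$ is \'etale, hence an open map, so $\varphi_0(\mathfrak{Y}_0)$ is an open subset of $\mathfrak{X}_0$ and, under the homeomorphism $\mathfrak{X}_0\cong\mathfrak{X}$, corresponds to an open formal subscheme $\mathfrak{U}\subseteq\mathfrak{X}$; by the compatibility of the reduction map with open immersions, $\pi_{\mathfrak{X}}^{-1}(\varphi_0(\mathfrak{Y}_0))=\mathfrak{U}_\eta$. Affine-locally, for a basic open $D(f)$ one computes $D(f)_\eta=\{\,y\in\mathfrak{X}_\eta:|f(y)|=1\,\}$, the complement in $\mathfrak{X}_\eta$ of the open tube $\{|f|<1\}$; this is a closed subset and an analytic domain of Laurent type, whence $\mathfrak{U}_\eta$ is a closed analytic domain. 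For (ii), keeping $\mathfrak{Y}$ in the standard form, $\mathfrak{Y}_\eta=\pi^{-1}(\mathfrak{Y}_0)$ is a closed analytic domain in $\text{Spf}(\mathcal{C})_\eta$; over any affinoid domain $\mathcal{M}(\mathcal{A}')$ of $\mathfrak{X}_\eta$ the preimage of $\text{Spf}(\mathcal{C})_\eta$ equals $\mathcal{M}(\mathcal{A}'[T]/(P))$ with $\mathcal{A}'[T]/(P)$ finite \'etale over $\mathcal{A}'$, so $\text{Spf}(\mathcal{C})_\eta\to\mathfrak{X}_\eta$ is finite \'etale, and covering $\mathfrak{Y}_\eta$ near a point by affinoid domains of $\text{Spf}(\mathcal{C})_\eta$ exhibits it locally as a finite union of affinoid domains inside a $K$-affinoid space \'etale over $\mathfrak{X}_\eta$ (when $\mathfrak{X}$ is topologically of finite type, $\text{Spf}(\mathcal{C})_\eta$ is itself $K$-affinoid; in general one works over affinoid domains of $\mathfrak{X}_\eta$) --- which is precisely the definition of a quasi-\'etale morphism.

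The hard part will be the second half of the first step: establishing the standard local form $\mathcal{C}=\mathcal{A}\{T\}/(P)$ with $P'$ invertible for an \'etale morphism of \emph{special} --- not merely topologically of finite type --- formal $R$-schemes, which requires passing carefully from the structure theorem on each truncation $\mathcal{A}/\mathcal{J}^{n}$ to the adic algebra $\mathcal{A}$. A milder, related nuisance is the bookkeeping needed to verify the definition of ``quasi-\'etale'' when the base $\mathfrak{X}_\eta$ is not affinoid; everything else amounts to routine applications of Hensel's lemma and of the explicit formulas for $\mathfrak{X}_\eta$ and $\pi$.
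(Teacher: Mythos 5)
This lemma is not proved in the paper at all: it is quoted verbatim from Berkovich \cite{Ber}, Prop.~2.1, so there is no in-text argument to compare yours against. Your reconstruction follows what is essentially Berkovich's own route (functoriality of the reduction map for the easy inclusion, the \'etale local structure plus Henselian lifting in $\mathscr{H}(x)^{\circ}$ for the reverse inclusion, and the identification $\mathfrak{U}_{\eta}\cong\pi^{-1}(\mathfrak{U}_0)$ for open formal subschemes to get closedness), and the Hensel-lifting step is carried out correctly, including the necessary unramified enlargement of $\mathscr{H}(x)$ and the verification that the lifted point reduces into $\mathfrak{Y}_0$.

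Two points deserve attention. First, the step you yourself flag as hard --- the standard \'etale form over a \emph{special} adic base --- is exactly the load-bearing input, and the clean way to obtain it is not by hand from the truncations $\mathcal{A}/\mathcal{J}^n$ but via the equivalence, quoted in Subsection~\ref{ncf} of this paper, between formal schemes \'etale over $\mathfrak{X}$ and schemes \'etale over $\mathfrak{X}_0$: one writes $\mathfrak{Y}_0\to\mathfrak{X}_0$ in standard \'etale form and lifts the presentation. Note also that your normalization ``$P'$ invertible in $\mathcal{C}$'' overstates what the scheme-theoretic structure theorem gives (invertibility only after localizing at some $g$); this matters in (ii), where $\mathcal{A}'[T]/(P)$ is then finite but not finite \'etale over $\mathcal{A}'$, and you must restrict to the \'etale locus (e.g.\ the Laurent domain where $|g|=1$, on which $P'$ is invertible) before invoking the definition of quasi-\'etale. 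Second, when $\mathfrak{X}$ is genuinely special rather than topologically of finite type, $\mathfrak{X}_{\eta}$ is only a rising union of affinoids, and the definition of quasi-\'etale requires affinoids \'etale over $X$ itself, i.e.\ finite \'etale over \emph{open} subsets of $\mathfrak{X}_{\eta}$, not merely over affinoid subdomains; your parenthetical ``one works over affinoid domains of $\mathfrak{X}_{\eta}$'' does not by itself verify this, though it is repairable by interposing the open subsets $\{|S|<r\}$ between the affinoid exhaustion pieces. With those two repairs the sketch is a faithful account of the cited proposition.
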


For a $K$-analytic space $X$, let $X_{\text{q\'et}}$ denote the quasi-\'etale site of $X$ as in \cite{Ber2}. The quasi-\'etale topology on $X$ is the Grothendieck topology on the category of quasi-\'etale morphisms $U\to X$ generated by the pretopology for which the set of coverings of $(U\to X)$ is formed by the families $\{f_i: U_i\to U\}_{i\in I}$ such that each point of $U$ has a neighborhood of the form $f_{i_1}(V_1)\cup\dots\cup f_{i_n}(V_n)$ for some affinoid domains $V_1\subset U_{i_1},\dots,V_n\subset U_{i_n}$. There is a morphism of sites $\mu: X_{\text{q\'et}}\to X_{\text{\'et}}$. Denote by $X_{\text{q\'et}}^{\sim}$ the category of sheaves of sets on $X_{\text{q\'et}}$. The functor $\mu^*:X_{\text{\'et}}^{\sim}\to X_{\text{q\'et}}^{\sim}$ is a fully faithful functor (cf. \cite{Ber2}).

Let $\mathfrak{X}$ be a special formal $R$-scheme. By \cite{Ber2}, the correspondence $\mathfrak{Y}\mapsto \mathfrak{Y}_0$ induces an equivalence between the category of formal schemes \'etale over $\mathfrak{X}$ and the category of schemes \'etale over $\mathfrak{X}_0$. We fix the functor $\mathfrak{Y}_0\mapsto\mathfrak{Y}$ which is inverse to the previous correspondence $\mathfrak{Y}\mapsto\mathfrak{Y}_0$. The composition of the functor $\mathfrak{Y}_0\mapsto\mathfrak{Y}$ with the functor $\mathfrak{Y}\mapsto \mathfrak{Y}_{\eta}$ induces a morphism of sites $\nu: \mathfrak{X}_{\eta\text{q\'et}}\to\mathfrak{X}_{0\text{\'et}}$. By \cite{Ber}, this construction also holds over a separable closure $K^s$ of $K$, therefore we shall also denote by $\nu$ the corresponding morphism of sites $\mathfrak{X}_{\overline{\eta}\text{q\'et}}\to\mathfrak{X}_{\overline{0}\text{\'et}}$, where $\mathfrak{X}_{\overline{\eta}}:=\mathfrak{X}_{\eta}\widehat{\otimes}_K\widehat{K^s}$ and $\mathfrak{X}_{\overline{0}}:=\mathfrak{X}_0\otimes_{\kappa} \kappa^s$.

Now consider the composition of the functors $\mu^*: \mathfrak{X}_{\overline{\eta}\text{\'et}}^{\sim}\to \mathfrak{X}_{\overline{\eta}\text{q\'et}}^{\sim}$ and $\nu_*: \mathfrak{X}_{\overline{\eta}\text{q\'et}}^{\sim}\to \mathfrak{X}_{\overline{0}\text{\'et}}^{\sim}$, namely $\nu_*\mu^*: \mathfrak{X}_{\overline{\eta}\text{\'et}}^{\sim}\to \mathfrak{X}_{\overline{0}\text{\'et}}^{\sim}$. This resulting functor composing with the pullback (or inverse image) functor of the canonical morphism $\mathfrak X_{\overline{\eta}}\to \mathfrak X_{\eta}$ yields a functor $\psi: \mathfrak{X}_{\eta\text{\'et}}^{\sim}\to \mathfrak{X}_{\overline{0}\text{\'et}}^{\sim}$, which is called the {\it nearby cycles functor} (see \cite{Ber2, Ber}). It is a left exact functor, thus we can involve right derived functors $R^i\psi: \mathbf{S}(\mathfrak{X}_{\eta})\to \mathbf{S}(\mathfrak{X}_{\overline{0}})$ and $R\psi: D^+(\mathfrak{X}_{\eta})\to D^+(\mathfrak{X}_{\overline{0}})$, the latter is exact while the others are right exact functors. If necessary, we can write $R^i\psi_{\mathfrak f}$ and $R\psi_{\mathfrak f}$ labeling $\mathfrak f$ the structural morphism of $\mathfrak{X}$.

\begin{lemma}[Berkovich \cite{Ber}, Cor. 2.3]\label{lem2.3}
Let $\varphi: \mathfrak{Y}\to \mathfrak{X}$ be an \'etale morphism of special formal $R$-schemes and $F$ in $\mathbf{S}(\mathfrak{X}_{\eta})$. Then for any $m\geq 0$ we have $(R^m\psi F)|_{\mathfrak{Y}_{\overline{0}}}\cong R^m\psi(F|_{\mathfrak{Y}_{\eta}})$. 
\end{lemma}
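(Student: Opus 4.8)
The plan is to show that the three morphisms of sites out of which $\psi$ is built --- the base change $\theta\colon\mathfrak X_{\overline{\eta}}\to\mathfrak X_{\eta}$, the comparison $\mu\colon\mathfrak X_{\overline{\eta}\text{q\'et}}\to\mathfrak X_{\overline{\eta}\text{\'et}}$, and $\nu\colon\mathfrak X_{\overline{\eta}\text{q\'et}}\to\mathfrak X_{\overline{0}\text{\'et}}$, so that $\psi=\nu_*\mu^*\theta^*$ --- are all compatible with restriction along the \'etale morphism $\varphi$, and then to pass from the degree-zero statement to all $m\geq 0$ by dimension shifting.

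First I would establish the case $m=0$, i.e. an isomorphism of functors $\psi_{\mathfrak Y}\circ\varphi_\eta^*\cong\varphi_{\overline{0}}^*\circ\psi_{\mathfrak X}$ (here $\psi_{\mathfrak X}$, $\psi_{\mathfrak Y}$ denote the nearby cycles functors of $\mathfrak X$, $\mathfrak Y$). Since $\varphi$ is \'etale, so is $\varphi_0\colon\mathfrak Y_0\to\mathfrak X_0$, hence so is $\varphi_{\overline{0}}$; thus $\varphi_{\overline{0}}^*=(-)|_{\mathfrak Y_{\overline{0}}}$ identifies $\mathfrak Y_{\overline{0}\text{\'et}}^{\sim}$ with the localized topos $\mathfrak X_{\overline{0}\text{\'et}}^{\sim}/\mathfrak Y_{\overline{0}}$. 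By Lemma \ref{lem2.2}(ii) the morphism $\varphi_\eta$ is quasi-\'etale, so $\varphi_\eta^*=(-)|_{\mathfrak Y_\eta}$ is defined; it commutes with $\theta^*$ because the relevant square of analytic spaces is Cartesian, and with $\mu^*$ by functoriality of $X\mapsto(X_{\text{q\'et}}\to X_{\text{\'et}})$ in $X$. The substantive point is that Berkovich's equivalence from \cite{Ber2} between formal schemes \'etale over $\mathfrak X$ and schemes \'etale over $\mathfrak X_0$ is compatible with this localization: since $\varphi$ is \'etale, a formal scheme \'etale over $\mathfrak Y$ is the same datum as a formal scheme \'etale over $\mathfrak X$ equipped with an $\mathfrak X$-morphism to $\mathfrak Y$ (cancellation for \'etale morphisms), its reduction is correspondingly \'etale over $\mathfrak Y_0$, and its generic fibre is the same analytic space whether formed over $\mathfrak X$ or over $\mathfrak Y$. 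Hence the continuous functor underlying $\nu_{\mathfrak Y}$ is the restriction of the one underlying $\nu_{\mathfrak X}$, the associated square of morphisms of sites $2$-commutes, and composing the three compatibilities gives the degree-zero isomorphism, natural in $F$.

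For general $m$, I would take an injective resolution $F\to I^\bullet$ in $\mathbf S(\mathfrak X_\eta)$. As $\varphi_{\overline{0}}^*$ is exact, $\varphi_{\overline{0}}^*(R^m\psi_{\mathfrak X}F)\cong H^m(\varphi_{\overline{0}}^*\psi_{\mathfrak X}I^\bullet)\cong H^m(\psi_{\mathfrak Y}\varphi_\eta^* I^\bullet)$ by the case $m=0$, and $\varphi_\eta^* I^\bullet$ resolves $\varphi_\eta^* F$ since $\varphi_\eta^*$ is exact; so it remains to verify that each $\varphi_\eta^* I^j$ is $\psi_{\mathfrak Y}$-acyclic. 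This is a local question on $\mathfrak Y$ --- the open-immersion case of the asserted compatibility being immediate, as $\mathfrak W_\eta=\pi^{-1}(\mathfrak W_0)$ literally for an open formal subscheme $\mathfrak W\subset\mathfrak Y$ --- and, using Lemma \ref{lem2.2}(ii) and the local structure of \'etale morphisms to reduce $\varphi_\eta$ to an \'etale morphism of analytic spaces, it follows from the fact that \'etale pullback of \'etale sheaves on analytic spaces has an exact left adjoint (extension by zero), hence preserves injectives. Combining, one gets $\varphi_{\overline{0}}^*(R^m\psi_{\mathfrak X}F)\cong R^m\psi_{\mathfrak Y}(\varphi_\eta^* F)$ for every $m\geq 0$, naturally in $F$.

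The hard part is the first step: one must check that the canonical lift of an \'etale $\mathfrak X_0$-scheme to a formal scheme is unaffected by replacing $\mathfrak X$ with an \'etale $\mathfrak X$-scheme, and that the generic-fibre functor is likewise insensitive to the base, so that the quasi-\'etale-site description of $\nu$ restricts correctly; this is precisely where Lemma \ref{lem2.2} enters, guaranteeing that the generic fibres involved are genuine quasi-\'etale $\mathfrak X_\eta$-spaces and that their images are closed analytic domains. Once that bookkeeping is done, the passage to higher $m$ is routine homological algebra, the only analytic ingredient being the preservation of injectives under \'etale pullback of \'etale sheaves on analytic spaces.
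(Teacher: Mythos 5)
The paper offers no proof of this lemma at all: it is quoted directly from Berkovich \cite{Ber}, Cor.~2.3, so there is no internal argument to compare yours against. That said, your skeleton --- compatibility of the three site morphisms defining $\psi$ with restriction along $\varphi$, the degree-zero identification $\psi_{\mathfrak Y}\circ\varphi_\eta^*\cong\varphi_{\overline{0}}^*\circ\psi_{\mathfrak X}$, then passage to higher degrees --- is the right one and is essentially how the result is obtained in the source; in particular the cancellation argument identifying formal schemes \'etale over $\mathfrak Y$ with formal schemes \'etale over $\mathfrak X$ equipped with an $\mathfrak X$-morphism to $\mathfrak Y$, and the resulting $2$-commutativity of the square of $\nu$'s, is sound.

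The gap is in your acyclicity step for higher $m$. You propose to ``reduce $\varphi_\eta$ to an \'etale morphism of analytic spaces'' and then invoke an exact extension-by-zero left adjoint to conclude that $\varphi_\eta^*$ preserves injectives. But by Lemma \ref{lem2.2} the morphism $\varphi_\eta$ is only \emph{quasi-\'etale}, and its image $\varphi_\eta(\mathfrak Y_\eta)=\pi^{-1}(\varphi_0(\mathfrak Y_0))$ is a \emph{closed} analytic domain of $\mathfrak X_\eta$; the inclusion of a closed analytic domain (a closed disc inside an open one, say) is quasi-\'etale but not \'etale and is not locally an open immersion, so there is no covering reducing it to the \'etale case and no exact extension-by-zero adjoint. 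Hence $\varphi_\eta^*I^j$ need not be injective, and the ``routine homological algebra'' does not close. What is actually required is that $\mu^*\theta^*I^j$, restricted to the quasi-\'etale site of $\mathfrak Y_{\overline{\eta}}$, be $\nu_{\mathfrak Y*}$-acyclic; this rests on Berkovich's comparison of \'etale and quasi-\'etale cohomology in \cite{Ber2,Ber} --- equivalently, on the description of $R^m\psi F$ as the sheafification of $\mathfrak Y'_0\mapsto H^m(\mathfrak Y'_{\overline{\eta}},F)$ over the \'etale site of $\mathfrak X_0$ --- which is the genuine analytic content of the lemma and cannot be replaced by the formal adjunction argument you give.
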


\subsection{The comparison theorem for nearby cycles}\label{BCTsection}
By \cite[Thm 3.1]{Ber}, the comparison theorem for nearby cycles functor working on a henselian ring $R$. Let $\mathscr E$ be a scheme locally of finite type over $R$ with the structural morphism $f$; and let $\mathscr E_0$ be the zero locus of $f$, which is a $\kappa$-scheme. Then $\mathscr E_0=\widehat{\mathscr E}_0$, where the scheme on the right is the reduction of the completion $\widehat{\mathscr E}$ of the scheme $\mathscr E$. For a subscheme $\mathscr{Y}\subset \mathscr E_0$, let $\widehat{\mathscr E }_{/\mathscr{Y}}$ denote the formal $\mathfrak m$-adic completion of $\widehat{\mathscr E }$ along $\mathscr{Y}$. A result of \cite{Ber} shows that there is a canonical isomorphism of $K$-analytic spaces $(\widehat{\mathscr E }_{/\mathscr{Y}})_{\eta}\cong \pi^{-1}(\mathscr{Y})$, where $\pi$ is the reduction map $\widehat{\mathscr E }_{\eta}\to\mathscr E_0$. For a sheaf $\mathcal{F}\in\mathscr E _{\eta\text{\'et}}^{\sim}$, with $\mathscr E _{\eta}:=\mathscr E \otimes_RK$, let $\widehat{\mathcal{F}}_{/\mathscr{Y}}$ denote the pullback of $\mathcal{F}$ on $(\widehat{\mathscr E }_{/\mathscr{Y}})_{\eta}$. The nearby cycles functor for $\mathscr E$, for $\widehat{\mathscr E}$ and for $(\widehat{\mathscr E}_{/\mathscr Y})_{\eta}$ will be denoted by the same symbol $\psi$. If $\mathscr Y$ is an (ordinary) $\kappa$-scheme, we define $\overline{\mathscr{Y}}:=\mathscr{Y}\otimes_{\kappa} \kappa^s$.

%
\begin{theorem}[Berkovich \cite{Ber}, Thm. 3.1]\label{mainber}
Let $\mathcal{F}$ be an \'etale abelian constructible sheaf on $\mathscr E _{\eta}$. For $i\geq 0$, there is a canonical isomorphism $(R^i\psi\mathcal{F})|_{\overline{\mathscr{Y}}}\cong R^i\psi(\widehat{\mathcal{F}}_{/\mathscr{Y}})$.
\end{theorem}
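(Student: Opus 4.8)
\emph{Strategy and the comparison morphism.}
The plan is to construct a canonical comparison morphism, reduce the statement to the special fibre, and then prove it is an isomorphism by induction on dimension, the main ingredient being a resolution-of-singularities dévissage to the strict normal crossings case where both sides are computed by the same combinatorial recipe. Concretely, by the construction recalled in Subsection~\ref{ncf} the functor $\mathfrak Y_0\mapsto\mathfrak Y$ sends a scheme \'etale over $\mathscr E_0$ to the $\mathfrak m$-adic completion along it of the associated scheme \'etale over $\mathscr E$; composing with $\mathfrak Y\mapsto\mathfrak Y_\eta$ and with the morphism of \'etale topoi induced by the canonical map $(\widehat{\mathscr E})_\eta\to\mathscr E_\eta$ (along which $\widehat{\mathcal F}$ is the pullback of $\mathcal F$), one obtains a canonical morphism
\[
c\colon (R\psi\mathcal F)|_{\overline{\mathscr E_0}}\longrightarrow R\psi(\widehat{\mathcal F})
\]
in $D^+(\overline{\mathscr E_0})$, and, for an arbitrary subscheme $\mathscr Y\subset\mathscr E_0$, a morphism $c_{\mathscr Y}\colon (R\psi\mathcal F)|_{\overline{\mathscr Y}}\to R\psi(\widehat{\mathcal F}_{/\mathscr Y})$. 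Since $\widehat{\mathscr E}_{/\mathscr Y}$ is the restriction of $\widehat{\mathscr E}$ over the locally closed subscheme $\mathscr Y$ of $\mathscr E_0$ and the formal nearby cycles functor is compatible with such restrictions (clear from its construction via the reduction map, cf.~\cite{Ber2,Ber}), the morphism $c_{\mathscr Y}$ is an isomorphism once $c=c_{\mathscr E_0}$ is. A morphism of complexes of \'etale sheaves is an isomorphism precisely when it is so on every geometric stalk, so it suffices to prove that $c$ is an isomorphism; the question being local on $\mathscr E$, I may assume $\mathscr E$ affine of finite type over $R$ and argue by induction on $\dim\mathscr E$.

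\emph{Induction via resolution, and reduction to the basic case.}
Since $K$ has equal characteristic zero, there is a proper birational morphism $h\colon\mathscr E'\to\mathscr E$, an isomorphism over a dense open $\mathscr E^{\circ}\subset\mathscr E$, with $\mathscr E'$ regular and such that $\mathscr E'_0$ together with the closures of the strata of a stratification of $\mathscr E'_\eta$ adapted to $h_\eta^{*}\mathcal F$ forms a strict normal crossings configuration (embedded resolution; cf.~\cite{Tem1}); set $Z=\mathscr E\setminus\mathscr E^{\circ}$, a closed subscheme of dimension $<\dim\mathscr E$. Both nearby cycles functors commute with proper direct images (classically on the algebraic side, and by \cite{Ber2,Ber} on the formal side), forming the analytic generic fibre commutes with proper direct images of sheaves (proper base change for Berkovich \'etale cohomology, \cite{Ber1}), and $c$ is compatible with all of this; likewise both functors carry the excision triangles $a_!a^{*}G\to G\to b_{*}b^{*}G\to a_!a^{*}G[1]$ (for $a$ an open and $b$ the complementary closed immersion of subschemes of a generic fibre) to triangles, compatibly with $c$. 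Combining these facts with the distinguished triangle $\mathcal F\to Rh_{\eta*}h_\eta^{*}\mathcal F\to\mathscr C\to\mathcal F[1]$, whose cone $\mathscr C$ is the direct image of a constructible complex from $Z_\eta$ and is hence covered by the inductive hypothesis for $Z$, I reduce the statement to the following \emph{basic case}: $\mathscr E$ is regular, $\mathscr E_0$ is a strict normal crossings divisor, and $\mathcal F=a_!\mathcal L$ is the extension by zero of a locally constant constructible sheaf $\mathcal L$ from a locally closed subscheme $W\subset\mathscr E_\eta$ whose closure in $\mathscr E$ forms a strict normal crossings configuration together with $\mathscr E_0$.

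\emph{The basic case.}
Write $\mathscr E_0=\sum_{i\in I}a_iE_i$ and stratify $\mathscr E_0$ by the locally closed subsets $E_J^{\circ}=\bigl(\bigcap_{i\in J}E_i\bigr)\setminus\bigl(\bigcup_{i\notin J}E_i\bigr)$ for $\varnothing\neq J\subset I$. A morphism of sheaves on $\overline{\mathscr E_0}$ is an isomorphism exactly when its restriction to the geometric version of each stratum is, so it is enough to show that $c$ is an isomorphism over every $E_J^{\circ}$. Over such a stratum, $R^q\psi\mathcal F$ is given by the classical formula for the nearby cycles of a normal crossings degeneration, in terms of the multiplicities $a_i$ ($i\in J$), the position of $W$ relative to the $E_i$, and the monodromy of $\mathcal L$ (automatically tame, as $\kappa$ has characteristic zero); and $R^q\psi(\widehat{\mathcal F})$ is given over the same stratum by Berkovich's computation of the nearby cycles of a poly-stable formal scheme \cite{Ber2,Ber}, producing the identical object. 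It then remains to check that $c$ realizes this coincidence: unwinding the construction of the comparison morphism on the standard \'etale-local models of a strict normal crossings degeneration, the assertion reduces to the elementary identification of the \'etale cohomology of a suitable product of punctured discs over $\overline{\eta}$ with that of the corresponding product of Berkovich annuli, compatibly with the monodromy. Hence $c$ is an isomorphism over each stratum, therefore an isomorphism; by the first reduction the statement follows for every subscheme $\mathscr Y$.

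\emph{The main obstacle.}
I expect the hard part to be the last step of the basic case. The comparison morphism $c$ is assembled from several morphisms of \'etale sites relating the algebraic and the analytic topoi, and one must verify that on the standard local models it induces precisely the tautological identification between the two combinatorial descriptions of the nearby-cycle sheaves; this forces a careful tracking of the Kummer covers and of the monodromy actions on both sides, and it is where Berkovich's comparison between algebraic and analytic \'etale cohomology (\cite{Ber1}) is really used. A secondary difficulty is the dévissage of the inductive step — arranging the resolution to be adapted to $\mathcal F$ and checking that each reduction (proper direct images, excision triangles, the birational triangle) is compatible with $c$.
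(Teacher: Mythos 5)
This statement is quoted from Berkovich (\emph{Vanishing cycles for formal schemes II}, Thm.~3.1); the paper supplies no proof of it, so there is no internal argument to compare yours against. Judged on its own, your sketch has a genuine gap at its very first reduction. You assert that once $c=c_{\mathscr E_0}$ is an isomorphism, $c_{\mathscr Y}$ is one for every subscheme $\mathscr Y\subset\mathscr E_0$, because ``the formal nearby cycles functor is compatible with such restrictions (clear from its construction via the reduction map).'' That compatibility is clear only when $\mathscr Y$ is \emph{open} in $\mathscr E_0$, via the invariance of $\psi$ under \'etale morphisms of formal schemes (Lemma~\ref{lem2.3}). For a closed subscheme --- say a single closed point --- the completion $\widehat{\mathscr E}_{/\mathscr Y}$ is a special formal scheme that is not \'etale over $\widehat{\mathscr E}$; its generic fibre is the tube $\pi^{-1}(\mathscr Y)$, an analytic domain of $\widehat{\mathscr E}_{\eta}$ that is not the generic fibre of any open formal subscheme; and the identification $(R\psi\,\widehat{\mathcal F})|_{\overline{\mathscr Y}}\cong R\psi(\widehat{\mathcal F}_{/\mathscr Y})$ is exactly Deligne's conjecture that the nearby/vanishing cycles along $\mathscr Y$ depend only on the formal completion along $\mathscr Y$ --- i.e.\ the entire content of the theorem beyond the already-known case $\mathscr Y=\mathscr E_0$ (the paper itself stresses this in Subsection~\ref{BCTsection}). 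Establishing that step is where Berkovich needs his continuity theorem for \'etale cohomology of germs of analytic spaces and the quasi-constructibility analysis of the tubes $\pi^{-1}(\mathscr Y)$; none of this appears in your sketch, so you have in effect assumed the hard half of the theorem.

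The rest of your architecture (construction of the comparison morphism; d\'evissage by embedded resolution, legitimate here since the paper works in equal characteristic zero; explicit matching over the strata of an SNC special fibre) is plausible, though it is not Berkovich's route: he works over an arbitrary henselian discrete valuation ring and inducts on dimension via fibrations rather than resolution. Two further points would need care even in your setting. First, commutation of the \emph{formal} nearby cycles with proper direct images is not free: it already requires a GAGA-type comparison between algebraic higher direct images and their analytic counterparts on generic fibres. Second, an SNC degeneration with multiplicities $a_i>1$ is not polystable, so ``Berkovich's computation for polystable formal schemes'' does not literally apply; you would need the tame local computation on the models $x_1^{a_1}\cdots x_k^{a_k}=t$. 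These are repairable, but the first gap is essential and would have to be filled by a genuinely new argument.
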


The previous theorem is widely known as the Berkovich's comparison theorem for nearby cycles, while the full version is in fact stated for both nearby cycles functor and vanishing cycles functor and it is motivated by a conjecture of Deligne. Part of the conjecture claims that the restrictions of the vanishing cycles sheaves of a scheme $\mathscr E$ of finite type over a henselian discrete valuation ring to the subscheme $\mathscr{Y}\subset\widehat{\mathscr E}_0$ depend only on the formal $\mathfrak m$-adic completion $\widehat{\mathscr E }_{/\mathscr{Y}}$ of $\mathscr E$ along $\mathscr{Y}$, and that the automorphism group of $\widehat{\mathscr E}_{/\mathscr{Y}}$ acts on them. By proving this comparison theorem, Berkovich \cite{Ber} provided the positive answer to Deligne's conjecture.

The following corollary runs over any complete discretely valued field.

\begin{corollary}[Berkovich \cite{Ber}, Cor. 3.6]\label{cormain}
Let $\mathscr{S}$ be an $R$-scheme of locally finite type, $\mathfrak{X}$ a special formal $\widehat{\mathscr{S}}$-scheme which is locally isomorphic to the formal $\mathfrak m$-adic completion of a $\mathscr{S}$-scheme of finite type along a subscheme of its reduction, $F$ an \'etale sheaf on $\mathfrak{X}_{\eta}$ locally in the \'etale topology of $\mathfrak{X}$ isomorphic to the pullback of a constructible sheaf on $\widehat{\mathscr{S}}_{\eta}$. Then $R\psi(F)$ is constructible and, for any subscheme $\mathscr{Y}\subset\mathfrak{X}_0$, there is a canonical isomorphism of complexes
$$R\Gamma(\overline{\mathscr{Y}},(R\psi F)|_{\overline{\mathscr{Y}}})\stackrel{\sim}{\to} R\Gamma(\overline{\pi^{-1}(\mathscr{Y})},F).$$
If, in addition, the closure of $\mathscr{Y}$ in $\mathfrak{X}_0$ is proper, there is a canonical isomorphism
$$R\Gamma_c(\overline{\mathscr{Y}},(R\psi F)|_{\overline{\mathscr{Y}}})\stackrel{\sim}{\to} R\Gamma_{\overline{\pi^{-1}(\mathscr{Y})}}(\mathfrak{X}_{\overline{\eta}},F).$$
\end{corollary}


\section{The polynomial $f$ and comparisons}\label{sec3}
From this section, the condition that $\kappa$ is an algebraically closed field will be used because of applying Berkovich's comparison theorem for nearby cycles. Also, $R$ and $K$ will stand for $\kappa[[t]]$ and $\kappa((t))$, respectively.
\subsection{Resetting the data}\label{sec3.1}
Let $f(x,y,z)$ be in $\kappa[x,y,z]$ such that $f(0,0,0)=0$ and $f(\tau x,\tau^{-1}y,z)=f(x,y,z)$ for $\tau\in \kappa^{\times}$. Let us consider the following $R$-schemes with the structural morphisms
\begin{equation}\label{eiffel}
\begin{aligned}
\mathscr E:=\text{Spec}(R[x,y,z]/(f(x,y,z)-t))&\to \text{Spec}(R),\\
\mathscr W:=\text{Spec}(R[z]/(f(0,0,z)-t))&\to \text{Spec}(R)
\end{aligned}
\end{equation}
given by $t=f(x,y,z)$, $t=f(0,0,z)$, respectively. Note that $\mathbb A_{\kappa}^{d_1}$ is a closed subvariety of $\kappa$-variety $\mathscr E_0=f^{-1}(0)$. We have identities $\mathfrak X= \widehat{\mathscr E }_{/\mathbb A_{\kappa}^{d_1}}$ and $\mathfrak Z=\widehat{\mathscr W}_{/0}$, where the formal schemes on the left hand sides were already defined in first section. 

Consider the reduction maps 
$\pi:\mathfrak X_{\eta}\to \mathfrak X_0$ and 
$\pi_{\mathscr W}: \mathfrak Z_{\eta}\to \mathfrak Z_0$.

\subsection{Applying the comparison theorem}\label{sec3.2}
Let $\mathbf{f}$ be the homogenization of $f$, i.e. $\mathbf{f}(x,y,z,\xi)$ is homogeneous in $d+1$ variables with $\mathbf{f}(x,y,z,1)=f(x,y,z)$ and $\deg(\mathbf{f})=\deg(f)=n$. Note that the $R$-scheme
$$\mathbf E:=\text{Proj}\Big(R[x,y,z,\xi]/(\mathbf f(x,y,z,\xi)-t\xi^n)\Big)$$
is locally of finite type. Let us consider the $t$-adic completion $\widehat{\mathbf E}$, which is a formal $R$-scheme canonically glued from the following affine formal $R$-schemes 
\begin{equation}\label{eq100}
\begin{aligned}
&\text{Spf} \Big(R\{\frac x {x_i}, \frac y {x_i},\frac z {x_i},\frac {\xi} {x_i}\}/\big(\mathbf f(\frac x {x_i}, \frac y {x_i},\frac z {x_i},\frac {\xi} {x_i})-t(\frac {\xi} {x_i})^n\big)\Big)&\ i=1,\dots, d_1,\\
&\text{Spf} \Big(R\{\frac x {y_j}, \frac y {y_j},\frac z {y_j},\frac {\xi} {y_j}\}/\big(\mathbf f(\frac x {y_j}, \frac y {y_j},\frac z {y_j},\frac {\xi} {y_j})-t(\frac {\xi} {y_j})^n\big)\Big)&\ j=1,\dots, d_2,\\
&\text{Spf} \Big(R\{\frac x {z_l}, \frac y {z_l},\frac z {z_l},\frac {\xi} {z_l}\}/\big(\mathbf f(\frac x {z_l}, \frac y {z_l},\frac z {z_l},\frac {\xi} {z_l})-t(\frac {\xi} {z_l})^n\big)\Big)&\ l=1,\dots,d_3,\\
&\text{Spf} \Big(R\{\frac x {\xi},\frac y {\xi},\frac z {\xi}\}/\big(f(\frac x {\xi},\frac y {\xi},\frac z {\xi})-t\big)\Big)\cong \widehat{\mathscr E}.
\end{aligned}
\end{equation}
The reduction $\widehat{\mathbf E}_0=\mathbf E_0$ is the hypersurface $\{\mathbf f=0\}$ in the projective space $\mathbb{P}_{\kappa}^d$, it admits the inclusions $\mathbb A_{\kappa}^{d_1}\subset \mathscr E_0\subset \mathbf E_0$.

Let $\tilde{\mathbb A}_{\kappa}^{d_1}$ be the closure of $\mathbb A_{\kappa}^{d_1}$ in $\mathbf E_0$. By construction, the embedding of $\widehat{\mathscr E}$ in $\widehat{\mathbf E}$ is an open immersion of formal $R$-schemes (thus it is an \'etale morphism). By \cite[Cor. 10.9.9]{GD}, the formal $R$-scheme $\mathfrak{X}=\widehat{\mathscr E}_{/\mathbb A_{\kappa}^{d_1}}$ can be identified to the fiber product of $\widehat{\mathscr E}\to \widehat{\mathbf E}$ and $\mathbf X:=\widehat{\mathbf E}_{/\tilde{\mathbb A}_{\kappa}^{d_1}}\to \widehat{\mathbf E}$. Since \'etale morphisms are preserved under base change, the induced morphism $\mathfrak X \to \mathbf X$ is also \'etale (it is even an open immersion). Denote by ${\widehat{\mathbf f}}$ the structural morphism of $\mathbf X$, which is induced by $\mathbf f$. We shall use the following notation
\begin{itemize}
\item[$\star$] $i:\mathfrak{X}_{\overline{\eta}}\to\mathbf X_{\overline{\eta}}$ is the embedding of analytic spaces, 
\item[$\star$] $j:\mathfrak{X}_0\to\mathbf X_0$, $k:\mathbf X_0 \setminus \mathfrak{X}_0\to \mathbf X_0$, $u:\mathbb A_{\kappa}^{d_1}\to \mathfrak{X}_0$ and $v:\mathbb A_{\kappa}^{d_1}\to \mathbf{X}_0$ are the embeddings of $\kappa$-schemes (note that $v=j\circ u$).
\end{itemize}

Let $F$ denote the constant sheaf $(\mathbb{Z}/\ell^n\mathbb{Z})_{\mathfrak{X}_{\overline{\eta}}}$ in $\mathbf S(\mathfrak{X}_{\overline{\eta}})$, $n\geq 1$. By Lemma \ref{lem2.3}, for any $m\geq 0$, we have 
$j^*R^m\psi_{\widehat{\mathbf f}}(i_!F)\cong R^m\psi_{\widehat{ f}}F$, hence
$j_!j^*R^m\psi_{\widehat{\mathbf f}}(i_!F)\cong j_!R^m\psi_{\widehat{ f}}F$. In the latter isomorphism, the complex on the right hand side can be fitted in the exact triangle
$$\to j_!R^m\psi_{\widehat{ f}}F\to R^m\psi_{\widehat{\mathbf f}}(i_!F)\to k_*k^*R^m\psi_{\widehat{\mathbf f}}(i_!F)\to.$$
The functor $v^*$ being exact, we have the following exact triangle 
\begin{align}\label{eq2}
\to u^*R^m\psi_{\widehat{ f}}F\to v^*R^m\psi_{\widehat{\mathbf f}}(i_!F)\to v^*k_*k^*R^m\psi_{\widehat{\mathbf f}}(i_!F)\to.
\end{align}
Observe that the support of the functor $v^*$ is $\mathbb A_{\kappa}^{d_1}$, which is a subset of $\mathfrak X_0$, while that of $k_*k^*$ is $\mathbf X_0 \setminus \mathfrak{X}_0$, and the two subsets $\mathbb A_{\kappa}^{d_1}$ and $\mathbf X_0 \setminus \mathfrak{X}_0$ are disjoint in $\mathbf X_0$. This means $v^*k_*k^*R^m\psi_{\widehat{\mathbf f}}(i_!F)\cong 0$, and one deduces that $R^m\psi_{\widehat{ f}}F|_{\mathbb A_{\kappa}^{d_1}}\cong R^m\psi_{\widehat{\mathbf f}}(i_!F)|_{\mathbb A_{\kappa}^{d_1}}$. The latter leads us to a quasi-isomorphism of complexes
\begin{align}\label{eq3}
R\Gamma_c(\mathbb A_{\kappa}^{d_1},R\psi_{\widehat{ f}}F|_{\mathbb A_{\kappa}^{d_1}})\stackrel{\text{qis}}{\to} R\Gamma_c(\mathbb A_{\kappa}^{d_1},R\psi_{\widehat{\mathbf f}}(i_!F)|_{\mathbb A_{\kappa}^{d_1}}).
\end{align}

Now apply Corollary \ref{cormain} to the nearby cycles functor $R\psi_{\widehat{\mathbf f}}$. For such an $\mathbf f$, the assumptions of that corollary are satisfied: the scheme $\mathbf E $ is of finite type over $R$ and the closure of $\mathbb A_{\kappa}^{d_1}$ in $\mathbf X_0$ is proper as $\mathbf X_0$ is. Let $\tilde{\pi}$ denote the reduction map $\mathbf X_{\eta}\to \mathbf X_0$. One then deduces from Corollary \ref{cormain} that
\begin{align}\label{eq4}
R\Gamma_c(\mathbb A_{\kappa}^{d_1},R\psi_{\widehat{\mathbf f}}(i_!F)|_{\mathbb A_{\kappa}^{d_1}})\stackrel{\sim}{\to} R\Gamma_{\overline{\tilde{\pi}^{-1}(\mathbb A_{\kappa}^{d_1})}}(\mathbf X_{\overline{\eta}},i_!F).
\end{align}

\subsection{Shrinking analytic domains}\label{sec3.3}
Let us consider $R\Gamma_{\overline{\tilde{\pi}^{-1}(\mathbb A_{\kappa}^{d_1})}}(\mathbf X_{\overline{\eta}},i_!F)$ as in (\ref{eq4}). We remark that the analytic space $\mathbf{X}_{\overline{\eta}}$ is the glueing of $A:=\mathfrak X_{\overline{\eta}}$ together with other analytic spaces which correspond to the formal schemes in (\ref{eq100}), each of which is a closed analytic domain in $\mathbf{X}_{\overline{\eta}}$ (Lemma \ref{lem2.2}). Similarly, $\overline{\tilde{\pi}^{-1}(\mathbb A_{\kappa}^{d_1})}$ is the glueing of $X:=\overline{\pi^{-1}(\mathbb A_{\kappa}^{d_1})}$ together with others in the same way. Define $P:=\mathbf{X}_{\overline{\eta}}\setminus A$ and $T:=\mathbf{X}_{\overline{\eta}} \setminus \overline{\tilde{\pi}^{-1}(\mathbb A_{\kappa}^{d_1})}$.

\begin{lemma}\label{lem3.1}
We have a quasi-isomorphism of complexes as follows
\begin{align}\label{eq20}
R\Gamma_{\overline{\tilde{\pi}^{-1}(\mathbb A_{\kappa}^{d_1})}}(\mathbf X_{\overline{\eta}},i_!F)\stackrel{\text{qis}}{\to} R\Gamma_{X}(A,F).
\end{align}
\end{lemma}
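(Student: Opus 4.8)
### Proof proposal

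The plan is to compare the local cohomology of $\mathbf{X}_{\overline{\eta}}$ supported on $\overline{\tilde{\pi}^{-1}(\mathbb A_{\kappa}^{d_1})}$ with that of the closed analytic domain $A=\mathfrak X_{\overline{\eta}}$ supported on $X=\overline{\pi^{-1}(\mathbb A_{\kappa}^{d_1})}$, using that the sheaf we compute with is $i_!F$, i.e. the extension by zero of $F$ from $A$. The key geometric fact to exploit is that $\tilde{\pi}^{-1}(\mathbb A_{\kappa}^{d_1})$ already sits inside $A$: indeed $\mathbb A_{\kappa}^{d_1}\subset\mathfrak X_0$ is disjoint from $\mathbf X_0\setminus\mathfrak X_0$, and by the compatibility of the reduction map with the open immersion $\mathfrak X\hookrightarrow\mathbf X$ (Subsection on the reduction map, together with Lemma \ref{lem2.2}(i)) one has $\tilde{\pi}^{-1}(\mathbb A_{\kappa}^{d_1})=\pi^{-1}(\mathbb A_{\kappa}^{d_1})\subset\mathfrak X_{\overline{\eta}}=A$, hence after base change to $\widehat{K^s}$ also $\overline{\tilde{\pi}^{-1}(\mathbb A_{\kappa}^{d_1})}=X\subset A$.

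First I would recall the definition of cohomology with supports: $R\Gamma_Z(Y,G)$ fits in the exact triangle $R\Gamma_Z(Y,G)\to R\Gamma(Y,G)\to R\Gamma(Y\setminus Z,G)\to$, and is computed by the functor $\Gamma_Z=\ker(\Gamma(Y,-)\to\Gamma(Y\setminus Z,-))$. Since $Z:=\overline{\tilde{\pi}^{-1}(\mathbb A_{\kappa}^{d_1})}$ is a closed subset of $A$, which is itself a closed analytic domain of $Y:=\mathbf{X}_{\overline{\eta}}$, and since $i_!F$ is supported on $A$ (its stalks vanish on $Y\setminus A$, in particular on the open set $T=Y\setminus Z$ only the part lying in $A$ contributes), the natural map $R\Gamma_Z(Y,i_!F)\to R\Gamma_Z(A,i^*i_!F)=R\Gamma_Z(A,F)$ should be an isomorphism. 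Concretely: by excision for cohomology with supports along the open $Y\setminus P=$ a neighborhood of $Z$ contained in $A$ — or more precisely, using the triangle above on $Y$ and the analogous one on $A$ — one reduces the claim to the vanishing of the contribution of $P=Y\setminus A$, which holds because $(i_!F)|_P=0$. I would phrase this via the commutative diagram of the two localization triangles (for $(Y,Z)$ and for $(A,Z)$) mapped by restriction, where the rightmost vertical arrow $R\Gamma(Y\setminus Z,i_!F)\to R\Gamma(A\setminus Z,F)$ and the middle one $R\Gamma(Y,i_!F)\to R\Gamma(A,F)$ are each isomorphisms because $i_!F$ vanishes outside $A$ and $A$ is closed in $Y$; the five lemma on the triangles then gives the isomorphism on the left, which is exactly \eqref{eq20}.

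The technical point to be careful about — and the main potential obstacle — is justifying that $R\Gamma(Y,i_!F)\xrightarrow{\sim}R\Gamma(A,i^*i_!F)$ when $A$ is merely a \emph{closed analytic domain} (not a Zariski-closed subspace) of $Y$: one needs that for the extension-by-zero sheaf $i_!F$ along a closed immersion of analytic domains, sections over $Y$ are the same as sections over $A$, and likewise over the complements of $Z$. This is where the precise meaning of $i_!$ for analytic domains in Berkovich's étale topology, and the fact that $A$ together with the pieces coming from \eqref{eq100} form a closed cover of $Y$ (Lemma \ref{lem2.2}), must be invoked; it should follow from the adjunction $i_!\dashv i^*$ together with $i^*i_!\cong\mathrm{id}$ for a closed immersion, applied on $Y$ and on the open complement $Y\setminus Z$. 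Once that identification of the middle and right terms of the two triangles is in hand, the lemma follows formally.
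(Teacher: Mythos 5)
Your proposal captures the same essential idea as the paper's proof and presents it in a cleaner, more structural way, so I will say it is correct and equivalent in substance, while flagging two points where it diverges or is imprecise.

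The paper's argument writes both sides of \eqref{eq20} via the mapping-cone description $R\Gamma_Z(-,\cdot)\cong Rf_*\mathrm{Cone}(\cdot\to i_{Y\setminus Z *}i_{Y\setminus Z}^*\cdot)$ and then applies to the cone the exact triangle attached to the decomposition of $\mathbf X_{\overline\eta}$ into the closed analytic domain $A$ and its open complement $P$; the disjointness of the supports of $i_P^*$ and $i_!$ kills the first term, identifying the cone on $\mathbf X_{\overline\eta}$ with $i_*$ of the cone on $A$, after which $R\widehat{\mathbf f}_{\overline\eta *}i_*=R\widehat f_{\overline\eta *}$ finishes. Your version works directly with the two localization triangles (for $(\mathbf X_{\overline\eta},\overline{\tilde\pi^{-1}(\mathbb A_\kappa^{d_1})})$ and $(A,X)$) and the five lemma, arguing that both the middle and right restriction maps are quasi-isomorphisms because $i_!F$ vanishes on $P=\mathbf X_{\overline\eta}\setminus A$. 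That is the same engine --- vanishing of $i_!F$ off $A$ together with $A$ being \emph{closed} --- and the two presentations buy the same thing; if anything the five-lemma phrasing makes the support reduction slightly more transparent, whereas the paper's mapping-cone phrasing makes explicit exactly which adjunction identities are used. You also correctly single out the genuine technical subtlety: $i:A\hookrightarrow\mathbf X_{\overline\eta}$ is the inclusion of a closed analytic domain, not a Zariski-closed immersion, and one must justify $i_!=i_*$ and the exactness of $i_!$; the paper handles this exactly as you anticipate, by citing Lemma \ref{lem2.2} for the closedness of $A$ and computing stalks.

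Two caveats. First, your opening identification $\overline{\tilde\pi^{-1}(\mathbb A_\kappa^{d_1})}=X\subset A$ is stated too strongly and does not match the paper's own description, which says $\overline{\tilde\pi^{-1}(\mathbb A_\kappa^{d_1})}$ is obtained by glueing $X$ together with other pieces coming from the charts of \eqref{eq100}; note also that since $\mathfrak X\hookrightarrow\mathbf X$ is an open immersion with $\mathfrak X_0=\mathbb A_\kappa^{d_1}$, Lemma \ref{lem2.2}(i) formally gives $\tilde\pi^{-1}(\mathbb A_\kappa^{d_1})=\mathfrak X_\eta=A$, which is not $X$ either. Fortunately your argument only uses the weaker relation $A\cap\overline{\tilde\pi^{-1}(\mathbb A_\kappa^{d_1})}=X$ (equivalently $A\cap T=B$ in the paper's notation), so the proof goes through, but the set-theoretic claim as you wrote it should not be asserted. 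Second, a small correction to the sketch at the end: the adjunction relevant to $i_!$ for a \emph{closed} immersion is $i^*\dashv i_*$ (with $i_!=i_*$), not $i_!\dashv i^*$, which is the adjunction for an open immersion; the operative facts are $i_!=i_*$, exactness of $i_!$, and $\Gamma(\mathbf X_{\overline\eta},i_*\mathcal G)=\Gamma(A,\mathcal G)$, as the paper spells out.
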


\begin{proof}
Let $i_{\alpha}$ be the embedding of an $\widehat{K^s}$-analytic space $\alpha$ in $\mathbf{X}_{\overline{\eta}}$, $i_{\alpha,\beta}$ the embedding of $\alpha$ in $\beta$ (thus $i_{A}=i$), and $B:=A\setminus X$. Now both sides of (\ref{eq20}) can be rewritten as follows
\begin{align*}
R\Gamma_{\overline{\tilde{\pi}^{-1}(\mathbb A_{\kappa}^{d_1})}}(\mathbf X_{\overline{\eta}},i_!F)&\stackrel{\text{qis}}{\to} R\widehat{\mathbf f}_{\overline{\eta} *}\text{Cone} (i_! F\to i_{T*}i_T^*i_! F),\\
R\Gamma_{X}(A,F)&\stackrel{\text{qis}}{\to} R\widehat{f}_{\overline{\eta} *}\text{Cone} (F\to i_{B,A *}i_{B,A}^*F).
\end{align*}

Note that the embeddings $i_P: P\hookrightarrow \mathbf{X}_{\overline{\eta}}$ and $i: A \hookrightarrow \mathbf{X}_{\overline{\eta}}$ altogether give rise to an exact triangle of complexes on $\mathbf{X}_{\overline{\eta}}$:  
\begin{align*}
\to i_{P!}i_P^*\text{Cone}(i_! F\to i_{T*}i_T^*i_! F)\to \text{Cone}(i_! F\to i_{T*}i_T^*i_! F)\\
\stackrel{h}{\longrightarrow} i_*i^*\text{Cone}(i_! F\to i_{T*}i_T^*i_! F)\to.
\end{align*}
The supports of $i_P^*$ and $i_!$ are disjoint, hence $h$ is a quasi-isomorphism. Rewrite $h$ in the form $h: \text{Cone} (i_! F\to i_{T*}i_T^*i_! F)\to \text{Cone} (i_* F\to i_{B*}i_{B,A}^* F)$. The identity $i_B=i\circ i_{B,A}$ implies the following isomorphisms of complexes
\begin{align*}
\text{Cone} (i_* F\to i_{B*}i_{B,A}^* F)&\cong\text{Cone} (i_* F\to i_*i_{B,A *}i_{B,A}^* F)\\
&\cong i_*\text{Cone} (F\to i_{B,A *}i_{B,A}^* F).
\end{align*}
We claim that $R\widehat{\mathbf f}_{\overline{\eta} *}i_*=R\widehat{f}_{\overline{\eta} *}$. Indeed, one deduces from \cite[Cor. 5.2.4]{Ber1} and $\widehat{\mathbf f}_{\overline{\eta}}\circ i=\widehat{f}_{\overline{\eta}}$ that $R\widehat{\mathbf f}_{\eta *}Ri_*=R\widehat{f}_{\eta *}$. That $i_*=i_!$ is as $A$ is closed in $\mathbf X_{\overline{\eta}}$ (cf. Lemma \ref{lem2.2}), while $i_!$ is exact since the stalk $(i_!F)_{\mathbf y}$ is equal to $F_{\mathbf y}$ if $\mathbf y\in A$, and zero otherwise, thus $Ri_*=i_*$. Finally, taking the exact functor $R\widehat{\mathbf f}_{\overline{\eta} *}$ to the quasi-isomorphism $h$ yields a quasi-isomorphism of complexes
$$R\widehat{\mathbf f}_{\overline{\eta} *}\text{Cone} (i_! F\to i_{T*}i_T^*i_! F)\stackrel{\text{qis}}{\to} R\widehat{f}_{\overline{\eta} *}\text{Cone} (F\to i_{B,A*}i_{B,A}^*F),$$
This proves the lemma.
\end{proof}

\subsection{Description of $A$, $X$ and $D$}\label{AXD}
We notice that from now on we shall abuse the notation $x$, $y$, $z$, and others, i.e. we shall use them parallelly with two different senses. Just before $(x,y,z)$ stands for a system of coordinates in $\mathbb A_{\kappa}^d$ ($d=d_1+d_2+d_3$), in what follow it will also denote the corresponding system of coordinates on the analytification $\mathbb A_{K^s}^{d,\text{an}}$. Similarly, if $\tau$ is an element in the group scheme $\mathbb G_{m,\kappa}$, we also write $\tau$ for the corresponding element in $\mathbb G_{m,K^s}^{\text{an}}$.

\begin{lemma}
With $f$ as in Theorem \ref{maintheorem}, the analytic space $A=\mathfrak X_{\overline{\eta}}$ is the inductive limit of the compact domains 
$$A_{\gamma,\epsilon}:=\{(x,y,z)\in\mathbb A_{\widehat{K^s},\text{Ber}}^d: |x|\leq \gamma^{-1}, |y|\leq \gamma\epsilon, |z|\leq \epsilon, f(x,y,z)=t\}$$
with $\gamma,\epsilon$ running over the value group $|(K^s)^*|$ of the absolute value on $K^s$ such that $\gamma,\epsilon \in (0,1)$ and $\gamma,\epsilon\to 1$. In the same way, $X=\overline{\pi^{-1}(\mathbb A_{\kappa}^{d_1})}$ is the inductive limit of 
$$X_{\gamma,\epsilon}:=\{(x,y,z)\in\mathbb A_{\widehat{K^s},\text{Ber}}^d: |x|< \gamma^{-1}, |y|\leq \gamma\epsilon, |z|\leq \epsilon, f(x,y,z)=t\}.$$
\end{lemma}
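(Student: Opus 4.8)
The plan is to unwind the definitions of $\mathfrak X_{\overline\eta}$ and of $\pi^{-1}(\mathbb A_\kappa^{d_1})$ in terms of the affine charts that were written down in \eqref{eq100}, and to match these charts with the explicit analytic polydisc description of $\mathfrak Y_\eta$ recalled in Subsection~\ref{sec2.1}. First I would recall that $\mathfrak X = \widehat{\mathscr E}_{/\mathbb A_\kappa^{d_1}}$, so that $\mathfrak X_\eta = \pi^{-1}(\mathbb A_\kappa^{d_1})$ inside $\widehat{\mathscr E}_\eta$ by the compatibility of the reduction map with open immersions. Writing $\widehat{\mathscr E} = \mathrm{Spf}\,(R\{u,v,w\}/(f(u,v,w)-t))$ with $u=x/\xi$, $v=y/\xi$, $w=z/\xi$, the generic fiber $\widehat{\mathscr E}_\eta$ is the closed analytic subspace of $E^{d_1}(0;1)\times E^{d_2}(0;1)\times E^{d_3}(0;1)$ cut out by $f(u,v,w)=t$; equivalently $\{(x,y,z):|x|\le 1,\ |y|\le 1,\ |z|\le 1,\ f(x,y,z)=t\}$. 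The reduction map sends such a point to the point of $\mathscr E_0=\{f=0\}\subset\mathbb A_\kappa^d$ obtained by reducing the coordinates, and $\mathbb A_\kappa^{d_1}\subset\mathscr E_0$ is the locus $\{\tilde v=0,\ \tilde w=0\}$; hence $\pi^{-1}(\mathbb A_\kappa^{d_1})$ is the subset where $|v|<1$ and $|w|<1$, i.e. where $|y|<1$, $|z|<1$. That already gives a description of $A$ and $X$; the content of the lemma is that these can be exhausted by the compact (resp.\ the indicated) domains $A_{\gamma,\epsilon}$, $X_{\gamma,\epsilon}$, and here the weight-$(1,-1,0)$ invariance is exploited to rescale the $x$ and $y$ directions asymmetrically.

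The key step is the change of variables. Because $f(\tau x,\tau^{-1}y,z)=f(x,y,z)$ for all $\tau\in\kappa^\times$ (and hence, after passing to analytifications as flagged in Subsection~\ref{AXD}, for all $\tau$ in $\mathbb G_{m,K^s}^{\mathrm{an}}$ with $|\tau|$ in the value group), the equation $f(x,y,z)=t$ is stable under $(x,y)\mapsto(\tau x,\tau^{-1}y)$. I would fix a uniformizer, pick $\gamma\in|(K^s)^*|\cap(0,1)$, and use $\tau$ with $|\tau|=\gamma$ to identify the locus $\{|x|\le 1,\ |y|\le 1,\ |z|\le 1\}$ with $\{|x|\le\gamma^{-1},\ |y|\le\gamma,\ |z|\le 1\}$; the extra parameter $\epsilon$ controls the $z$- and $y$-directions simultaneously via the obvious rescaling $z\mapsto$ (element of norm $\epsilon$)$\cdot z$, $y\mapsto$ (same)$\cdot y$, which is compatible with the equation only up to replacing $f$ by a scaled version — so more precisely one should think of $\epsilon$ not as an honest coordinate rescaling but as the cofinal sequence of radii $\{|y|\le\gamma\epsilon,\ |z|\le\epsilon\}$ whose union over $\gamma,\epsilon\to 1$ is exactly $\{|y|<1,\ |z|<1\}$ (open condition) and $\{|x|\le\gamma^{-1}\nearrow\}$ whose union is $\{|x|<\infty\}$, but intersected with $f(x,y,z)=t$ and with the boundedness built into $\mathfrak X_{\overline\eta}$ this is the right thing. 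I would then check that $A=\bigcup_{\gamma,\epsilon}A_{\gamma,\epsilon}$ as topological spaces (indeed as an increasing union of compact analytic domains, so that cohomology commutes with the colimit), and likewise $X=\bigcup_{\gamma,\epsilon}X_{\gamma,\epsilon}$, the only difference being the strict inequality $|x|<\gamma^{-1}$ which reflects the open condition $|y|<1,|z|<1$ transported through the $\tau$-rescaling to the $x$-slot; concretely, a point lies in $\pi^{-1}(\mathbb A_\kappa^{d_1})$ iff its $y,z$ reductions vanish, iff $|y|<1$ and $|z|<1$, iff for some $\gamma<1$ one has $|y|\le\gamma\epsilon$, $|z|\le\epsilon$ with $\gamma,\epsilon<1$, and after the $\tau$-rescaling with $|\tau|=\gamma$ this is $|x|<\gamma^{-1}$.

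I expect the main obstacle to be purely bookkeeping: making the informal "$\epsilon\to 1$'' rescaling precise without an actual multiplicative action in the $z$-direction (the polynomial $f$ need not be homogeneous in $z$), and verifying that the resulting families $A_{\gamma,\epsilon}$ and $X_{\gamma,\epsilon}$ are genuinely cofinal and consist of (compact) analytic domains in $\mathbf X_{\overline\eta}$ — this uses Remark~\ref{rk1}(i) (paracompactness), Lemma~\ref{lem2.2}(i) (closed analytic domains), and the fact that $\mathbb A_{\widehat{K^s},\mathrm{Ber}}^d$ is exhausted by closed polydiscs. Once the identifications of the defining inequalities are in hand, the lemma follows by taking the union. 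The reason this description is worth isolating is that in the sequel one applies the Künneth isomorphism of Proposition~\ref{prop2.2}(ii) to each $A_{\gamma,\epsilon}$ — which visibly (after the coordinate bounds are made to match) splits as a product of an $(x,y)$-part and a $z$-part — and then passes to the limit $\gamma,\epsilon\to 1$.
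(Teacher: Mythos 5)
Your plan for the $A$-part is essentially the paper's proof: unwind $\mathfrak X_\eta$ as the tube $\pi^{-1}(\mathbb A_\kappa^{d_1})$ inside the generic fiber of $\widehat{\mathscr E}$, identify it with the semi-closed polydisc $\{|x|\le 1,\,|y|<1,\,|z|<1,\,f=t\}$, exploit the $(1,-1,0)$-invariance to twist $R\{x,y,z\}/(f-t)$ into $R\{\tau_\gamma x,\tau_\gamma^{-1}y,z\}/(f-t)$ for $|\tau_\gamma|=\gamma$, which produces the isomorphic copies $A_\gamma=\{|x|\le\gamma^{-1},\,|y|<\gamma,\,|z|<1,\,f=t\}$, and finally exhaust the open conditions $|y|<\gamma$, $|z|<1$ by the nested compact domains $|y|\le\gamma\epsilon$, $|z|\le\epsilon$. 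This is exactly the argument the paper gives, modulo its phrasing of the transition maps as analytic isomorphisms of the $A_\gamma$'s rather than literal inclusions. Your emphasis on the cofinality of the $A_{\gamma,\epsilon}$'s is the right point of view.

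Where your plan goes wrong is in the last step, the description of $X_{\gamma,\epsilon}$. You write that the strict inequality $|x|<\gamma^{-1}$ arises because the open conditions $|y|<1$, $|z|<1$ are ``transported through the $\tau$-rescaling to the $x$-slot'', and then assert that $|y|\le\gamma\epsilon$, $|z|\le\epsilon$ become $|x|<\gamma^{-1}$ after rescaling. This is not what the rescaling does: $(x,y,z)\mapsto(\tau_\gamma^{-1}x,\tau_\gamma y,z)$ sends $|x|\le 1$ to $|x|\le\gamma^{-1}$ (non-strict) and $|y|\le\gamma\epsilon$ to $|y|\le\gamma^2\epsilon$; the bounds on $y$ and $z$ stay in their own slots, and a closed bound stays closed. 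The strict inequality on $|x|$ in the definition of $X_{\gamma,\epsilon}$ cannot be deduced from the openness of the $y,z$-conditions. In the paper, $X_{\gamma,\epsilon}$ is introduced simply as $A_{\gamma,\epsilon}\cap X$ with $X:=\overline{\pi^{-1}(\mathbb A_\kappa^{d_1})}$, and the explicit formula with $|x|<\gamma^{-1}$ is asserted without the chain of equivalences you attempt; so before you can claim this step, you would need to pin down precisely which reduction map $\pi$ is meant and why its tube over $\mathbb A_\kappa^{d_1}$, intersected with $A_{\gamma,\epsilon}$, shaves off exactly the boundary annulus $\{|x|=\gamma^{-1}\}$ and nothing else. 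That identification is not a formal consequence of the rescaling; it is a separate computation in the model $\mathbf X_{\overline\eta}$ (see Subsections~\ref{sec3.2} and~\ref{sec3.3}) that your plan does not address.
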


\begin{proof}
For each $\gamma\in |(K^s)^*|$, choose an element $\tau_{\gamma}$ in $\mathbb G_{m,\kappa}$ such that its corresponding element $\tau_{\gamma}$ in $\mathbb G_{m,\kappa}^{\text{an}}$ takes absolute value $\gamma$. Since $f(\tau_{\gamma} x, \tau_{\gamma}^{-1}y,z)=f(x,y,z)$, the following special $R$-algebras are isomorphic  
$$R\{\tau_{\gamma} x, \tau_{\gamma}^{-1} y,z\}/(f(x,y,z)-t)\cong R\{x,y,z\}/(f(x,y,z)-t).$$
Setting 
$$A_{\gamma}:=\left(\Big(\text{Spf} \frac{R\{\tau_{\gamma} x, \tau_{\gamma}^{-1} y,z\}}{(f(x,y,z)-t)}\Big)_{/\mathbb A_{\kappa}^{d_1}}\right)_{\overline{\eta}},$$
it is clear that 
\begin{align*}
A_{\gamma}&=\{(x,y,z)\in\mathbb A_{\widehat{K^s},\text{Ber}}^d: |\tau_{\gamma} x|\leq 1, |\tau_{\gamma}^{-1} y|< 1, |z|< 1, f(x,y,z)=t\}\\
&=\{(x,y,z)\in\mathbb A_{\widehat{K^s},\text{Ber}}^d: |x|\leq \gamma^{-1}, |y|< \gamma, |z|< 1, f(x,y,z)=t\}
\end{align*}
and that all the spaces $A_{\gamma}$'s, with $\gamma\in |(K^s)^*|$, are analytically isomorphic. The latter implies an analytic isomorphism between any pair $(A_{\gamma}, A_{\gamma'})$ with $\gamma$, $\gamma'$ in $|(K^s)^*|$, and thus one can establish an inductive system 
$$\left\{\{A_{\gamma}\}, \{A_{\gamma}\to A_{\gamma'}\}_{\gamma<\gamma'} : \gamma, \gamma' \in |(K^s)^*|\cap (0,1)\right\}.$$
Then $A$ is exactly the inductive limit of this system $\{A_{\gamma}\}$ when $\gamma\to 1$. On the other hand, the space $\{y: |y|<\gamma\}$ is covered by the compact domains $\{z: |z|\leq \gamma\epsilon\}$ and the space $\{z: |z|<1\}$ is covered by the compact domains $\{z: |z|\leq \epsilon\}$ with $\epsilon\in |(K^s)^*|$ and $0< \epsilon <1$. Therefore $A$ can be viewed as the inductive limit of $A_{\gamma,\epsilon}$'s as above with $\gamma,\epsilon \in |(K^s)^*|\cap (0,1)$ and $\gamma,\epsilon\to 1$. 

The inductive system of $X_{\gamma,\epsilon}$'s whose limit describes $X$ is defined by $X_{\gamma,\epsilon}:=A_{\gamma,\epsilon}\cap X$, transition morphisms induce from those in the system of $A_{\gamma,\epsilon}$'s. 
\end{proof}

We also remark that $D:=\overline{\pi_{\mathscr W}^{-1}(0)}$ is an open and locally compact analytic space, it can be covered by the following compact domains
$$D_{\epsilon}:=\{z\in\mathbb A_{\widehat{K^s},\text{Ber}}^{d_3}: |z|\leq \epsilon, f(0,0,z)=t\},\ \epsilon\in |(K^s)^*|\cap (0,1).$$

\begin{corollary}\label{Ber}
Keeping the assumption of Theorem \ref{maintheorem} and fixing a $\gamma\in |(K^s)^*|\cap (0,1)$, we have

\indent (i) $R\Gamma_c(\mathbb A_{\kappa}^{d_1},R\psi_{\widehat{f}}F|_{\mathbb A_{\kappa}^{d_1}})\stackrel{\text{qis}}{\to} R\Gamma_{X_{\gamma}}(A_{\gamma},F_{\gamma}^{\circ})$, $F_{\gamma}^{\circ}$ the pullback of $F\in \mathbf S(A)$ via $A_{\gamma}\cong A$.\\
\indent (ii) $(R\psi_{\widehat{f}_{\mathfrak Z }}G)_0\stackrel{\text{qis}}{\to} R\Gamma(D,G|_D)$, for $G\in\mathbf S(\mathfrak Z_{\overline{\eta}})$.
\end{corollary}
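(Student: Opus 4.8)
For part (i), the strategy is to combine the chain of quasi-isomorphisms already established in Section \ref{sec3} with the description of $A$, $X$ as inductive limits. Composing \eqref{eq3}, \eqref{eq4} and Lemma \ref{lem3.1} yields a quasi-isomorphism $R\Gamma_c(\mathbb A_{\kappa}^{d_1},R\psi_{\widehat{f}}F|_{\mathbb A_{\kappa}^{d_1}})\stackrel{\text{qis}}{\to} R\Gamma_{X}(A,F)$, so it suffices to identify $R\Gamma_{X}(A,F)$ with $R\Gamma_{X_{\gamma}}(A_{\gamma},F_{\gamma}^{\circ})$ for a fixed $\gamma$. Here the key observation is that each $A_{\gamma}$ is analytically isomorphic to $A$ itself (this was proved via the $\kappa^{\times}$-equivariance of $f$ and the isomorphism of special $R$-algebras $R\{\tau_{\gamma}x,\tau_{\gamma}^{-1}y,z\}/(f-t)\cong R\{x,y,z\}/(f-t)$), and this isomorphism carries $X$ to $X_{\gamma}$ and $F$ to $F_{\gamma}^{\circ}$ by construction. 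Cohomology with supports is functorial under such isomorphisms, so the identification is immediate once the compatibility of $X$, $X_{\gamma}$ under the isomorphism $A_{\gamma}\cong A$ is spelled out.

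For part (ii), the plan is to apply Corollary \ref{cormain} to the formal scheme $\mathfrak Z=\widehat{\mathscr W}_{/0}$ and the nearby cycles functor $R\psi_{\widehat{f}_{\mathfrak Z}}$. Taking $\mathscr Y=\{0\}\subset\mathfrak Z_0$, the first isomorphism of Corollary \ref{cormain} reads $R\Gamma(\overline{\{0\}},(R\psi_{\widehat{f}_{\mathfrak Z}}G)|_{\overline{\{0\}}})\stackrel{\sim}{\to} R\Gamma(\overline{\pi_{\mathscr W}^{-1}(0)},G)$. Since $\kappa$ is algebraically closed the point $0$ is already defined over $\kappa^s$, so the left-hand side is simply the stalk $(R\psi_{\widehat{f}_{\mathfrak Z}}G)_0$, and the right-hand side is $R\Gamma(D,G|_D)$ with $D=\overline{\pi_{\mathscr W}^{-1}(0)}$ in the notation just introduced. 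One must check that the hypotheses of Corollary \ref{cormain} hold: the scheme $\mathscr W$ is of finite type over $R$ and $\mathfrak Z=\widehat{\mathscr W}_{/0}$ is the formal $\mathfrak m$-adic completion of $\mathscr W$ along the subscheme $\{0\}$ of its reduction, while $G$ is (a term of a complex representing) the constant sheaf, hence a pullback of a constructible sheaf on $\widehat{\mathscr S}_{\eta}$ with $\mathscr S=\operatorname{Spec}(R)$.

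\textbf{Main obstacle.} The routine part is the bookkeeping of functors; the one point requiring genuine care is the transition between the inductive limit $A=\varinjlim A_{\gamma,\epsilon}$ and a single $A_{\gamma}$ in part (i). Since $R\Gamma_{X}(A,F)$ is computed on the (non-compact) space $A$ whereas Corollary \ref{cormain} and the subsequent K\"unneth step want compact domains, one has to argue that passing to a cofinal subsystem indexed by $\gamma$ alone (with $\epsilon$ absorbed) does not change the cohomology with supports — this is exactly where the analytic isomorphisms $A_{\gamma}\cong A$ are indispensable, as they show that the pro-system is essentially constant rather than merely exhausting. The other delicate input, already dispatched in Lemma \ref{lem3.1} and its surrounding discussion, is that $i_!=i_*$ and that these functors are exact because $A$ is closed in $\mathbf X_{\overline{\eta}}$; I would simply cite that. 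So the corollary is, in effect, a clean repackaging of the work of Sections \ref{sec3.2}--\ref{sec3.3} together with Berkovich's comparison theorem specialised to the point.
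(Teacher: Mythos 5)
Your proof is correct and takes essentially the same route as the paper: chain \eqref{eq3}, \eqref{eq4}, and Lemma~\ref{lem3.1} to get $R\Gamma_c(\mathbb A_\kappa^{d_1}, R\psi_{\widehat f}F|_{\mathbb A_\kappa^{d_1}}) \stackrel{\text{qis}}{\to} R\Gamma_X(A,F)$, then transport via the analytic isomorphism $A_\gamma\cong A$ (which carries $X_\gamma$ to $X$ and $F$ to $F_\gamma^\circ$) for (i), and apply Corollary~\ref{cormain} with $\mathscr Y=\{0\}$ for (ii). One small caveat: your ``main obstacle'' paragraph overstates what is needed at this stage --- the passage from $A$ to a fixed $A_\gamma$ is a direct transport along a single analytic isomorphism and involves no pro-system or cofinality argument, and no ``absorption'' of $\epsilon$; the limit over $\epsilon$ is genuinely present but is handled later in Lemma~\ref{lem4.1}, not in this corollary.
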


\begin{proof}
By the description of $A$ and $X$, there are isomorphisms of analytic spaces $A_{\gamma}\cong A$ and $X_{\gamma}\cong X$ for a fixed $\gamma$ in $|(K^s)^*|\cap (0,1)$. These together with (\ref{eq3}), (\ref{eq4}) and Lemma \ref{lem3.1} imply (i). Also, (ii) follows from Corollary \ref{cormain}.
\end{proof}

\begin{corollary}\label{Bercor}
Keeping the assumption of Theorem \ref{maintheorem} and fixing a $\gamma\in |(K^s)^*|\cap (0,1)$, we have

\indent (i) $R\Gamma_c(\mathbb A_{\kappa}^{d_1},R\psi_{\widehat{f}}\mathbb Q_{\ell}|_{\mathbb A_{\kappa}^{d_1}})\stackrel{\text{qis}}{\to} R\Gamma_{X_{\gamma}}(A_{\gamma},\mathbb Q_{\ell})$,\\
\indent (ii) $(R\psi_{\widehat{f}_{\mathfrak Z}}\mathbb Q_{\ell})_0\stackrel{\text{qis}}{\to} R\Gamma(D,\mathbb Q_{\ell})$.
\end{corollary}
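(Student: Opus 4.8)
The plan is to deduce Corollary \ref{Bercor} from Corollary \ref{Ber} by specializing the coefficient sheaf. Corollary \ref{Ber} is formulated with $F\in\mathbf S(A)$ a general abelian sheaf (in practice the constant sheaf $(\mathbb Z/\ell^n\mathbb Z)$), and it produces, for each $n\geq 1$, quasi-isomorphisms of complexes of $(\mathbb Z/\ell^n\mathbb Z)$-modules. The first step is therefore to take $F = (\mathbb Z/\ell^n\mathbb Z)_A$ in part (i) and $G = (\mathbb Z/\ell^n\mathbb Z)_{\mathfrak Z_{\overline\eta}}$ in part (ii), observing that the pullback $F_\gamma^\circ$ of the constant sheaf under the isomorphism $A_\gamma\cong A$ is again the constant sheaf $(\mathbb Z/\ell^n\mathbb Z)_{A_\gamma}$, so the right-hand sides become $R\Gamma_{X_\gamma}(A_\gamma,\mathbb Z/\ell^n\mathbb Z)$ and $R\Gamma(D,\mathbb Z/\ell^n\mathbb Z)$ respectively.

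Next I would pass to the $\ell$-adic coefficients. Recall from Subsection on étale cohomology of analytic spaces that $H^*(-,\mathbb Z_\ell)$ and $H^*_c(-,\mathbb Z_\ell)$ are defined as the inverse limits $\projlim_n H^*(-,\mathbb Z/\ell^n\mathbb Z)$, and $H^*(-,\mathbb Q_\ell)$, $H^*_c(-,\mathbb Q_\ell)$ by tensoring with $\mathbb Q_\ell$ over $\mathbb Z_\ell$; the same convention applies to the nearby cycles complexes $R\psi_{\widehat f}\mathbb Q_\ell$ and to local cohomology $R\Gamma_{X_\gamma}(A_\gamma,\mathbb Q_\ell)$. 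The quasi-isomorphisms of Corollary \ref{Ber} are canonical, hence compatible with the transition maps $\mathbb Z/\ell^{n+1}\mathbb Z\to\mathbb Z/\ell^n\mathbb Z$; so they assemble into morphisms of pro-systems of complexes, and applying $R\!\projlim$ (respectively, taking the inverse limit of cohomology, since all groups in sight are finite so the pro-system is Mittag-Leffler and $R^1\!\projlim$ vanishes) yields quasi-isomorphisms of complexes of $\mathbb Z_\ell$-modules:
\begin{align*}
R\Gamma_c(\mathbb A_\kappa^{d_1}, R\psi_{\widehat f}\mathbb Z_\ell|_{\mathbb A_\kappa^{d_1}})&\stackrel{\text{qis}}{\to} R\Gamma_{X_\gamma}(A_\gamma,\mathbb Z_\ell),\\
(R\psi_{\widehat f_{\mathfrak Z}}\mathbb Z_\ell)_0&\stackrel{\text{qis}}{\to} R\Gamma(D,\mathbb Z_\ell).
\end{align*}
Finally, applying the exact functor $-\otimes_{\mathbb Z_\ell}\mathbb Q_\ell$ to both sides gives exactly the two quasi-isomorphisms (i) and (ii) asserted in Corollary \ref{Bercor}, since by definition $R\psi_{\widehat f}\mathbb Q_\ell = (R\psi_{\widehat f}\mathbb Z_\ell)\otimes_{\mathbb Z_\ell}\mathbb Q_\ell$ and likewise for all the cohomology complexes involved.

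The only genuinely delicate point is the passage to the limit: one must check that the isomorphisms in Corollary \ref{Ber} are functorial in the torsion level, i.e.\ that they commute with the maps induced by reduction modulo $\ell^n$. This follows because every isomorphism used to establish Corollary \ref{Ber} — Lemma \ref{lem2.3}, Corollary \ref{cormain} (Berkovich's comparison theorem), the disjoint-support vanishing, and the identifications $A_\gamma\cong A$, $X_\gamma\cong X$ — is canonical and hence natural with respect to morphisms of coefficient sheaves, in particular with respect to the projections $(\mathbb Z/\ell^{n+1}\mathbb Z)\twoheadrightarrow(\mathbb Z/\ell^n\mathbb Z)$. Granting this naturality, the finiteness of the groups $H^*_c(A_\gamma\text{ rel }X_\gamma,\mathbb Z/\ell^n\mathbb Z)$ and $H^*(D,\mathbb Z/\ell^n\mathbb Z)$ (a consequence of constructibility in Corollary \ref{cormain}) makes all the relevant pro-systems satisfy the Mittag-Leffler condition, so the inverse limit is exact on them and no derived-limit correction terms appear. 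Everything else is a formal consequence of the definitions of $\mathbb Z_\ell$- and $\mathbb Q_\ell$-cohomology adopted in Subsection \ref{ncf}.
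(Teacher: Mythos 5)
Your proposal is correct and coincides with the paper's intended (but unwritten) argument: Corollary~\ref{Bercor} is simply the $\ell$-adic form of Corollary~\ref{Ber}, obtained by taking $F$ and $G$ to be the constant torsion sheaves $\mathbb Z/\ell^n\mathbb Z$ (as the paper already does in Subsection~\ref{sec3.2}, where $F$ is fixed to be that sheaf), passing to $\projlim_n$, and tensoring with $\mathbb Q_\ell$, in accordance with the convention for $\ell$-adic cohomology of analytic spaces laid down in Section~\ref{sec2}. Your care about naturality of the comparison isomorphisms in the torsion level and about the Mittag-Leffler condition (finiteness of the groups being guaranteed by the constructibility statement in Corollary~\ref{cormain}) supplies exactly the bookkeeping the paper leaves implicit.
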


\section{Proof of Theorem \ref{maintheorem}}\label{sec5}
\subsection{Using comparison theorem} 
By Corollary \ref{Bercor}, there is a quasi-isomorphism of complexes
\begin{align}\label{eq10}
R\Gamma_c(\mathbb A_{\kappa}^{d_1},R\psi_{\widehat{f}}\mathbb Q_{\ell}|_{\mathbb A_{\kappa}^{d_1}})\stackrel{\text{qis}}{\to} R\Gamma_{X_{\gamma}}(A_{\gamma},\mathbb Q_{\ell}),
\end{align}
where $\gamma$ is fixed in $|(K^s)^*|\cap (0,1)$, $A_{\gamma}$ is the analytic subspace of $\mathbb A_{\widehat{K^s},\text{Ber}}^d$ given by $|x|\leq \gamma^{-1}$, $|y|<\gamma$, $|z|<1$ and $f(x,y,z)=t$, and $X_{\gamma}$ is defined as $A_{\gamma}$ but with $|x|< \gamma^{-1}$ in stead of $|x|\leq \gamma^{-1}$. The space $A_{\gamma}$ is a paracompact $\widehat{K^s}$-analytic space which is a union of the following increasing sequence of compact domains
$$A_{\gamma,\epsilon}:=\{(x,y,z)\in\mathbb A_{\widehat{K^s},\text{Ber}}^d: |x|\leq \gamma^{-1}, |y|\leq \gamma\epsilon, |z|\leq \epsilon, f(x,y,z)=t\},$$
for $\epsilon\in |(K^s)^*|\cap (0,1)$. The space $X_{\gamma}$ is covered by the corresponding increasing sequence 
$$X_{\gamma,\epsilon}=\{(x,y,z)\in\mathbb A_{\widehat{K^s},\text{Ber}}^d: |x|< \gamma^{-1}, |y|\leq \gamma\epsilon, |z|\leq \epsilon, f(x,y,z)=t\}.$$
Denote $B_{\gamma}:=A_{\gamma}\setminus X_{\gamma}$ and $B_{\gamma,\epsilon}:=A_{\gamma,\epsilon}\setminus X_{\gamma,\epsilon}$.

Let us consider $f^{\gamma}:=\widehat{f}_{\overline{\eta}}:A_{\gamma}\cong A\to \mathcal M(\widehat{K^s})$ and $f^{\gamma,\epsilon}$, the restriction of $f^{\gamma}$ to $A_{\gamma,\epsilon}$.

\begin{lemma}\label{lem4.1}
For any $m\geq 1$ and $F\in\mathbf S(A_{\gamma})$, there is a canonical isomorphism of groups
$$H^m_{X_{\gamma}}(A_{\gamma},F)\cong \projlim_{\epsilon\to 1}H^m_{X_{\gamma,\epsilon}}(A_{\gamma,\epsilon},F).$$
\end{lemma}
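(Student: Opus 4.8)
The plan is to exhibit the group $H^m_{X_\gamma}(A_\gamma,F)$ as a limit of the local cohomology groups over the compact exhaustion by the $A_{\gamma,\epsilon}$, using the fact that $A_\gamma$ is paracompact and is the increasing union of the compact analytic domains $A_{\gamma,\epsilon}$ (with $X_\gamma = \bigcup_\epsilon X_{\gamma,\epsilon}$, $X_{\gamma,\epsilon}=A_{\gamma,\epsilon}\cap X_\gamma$). First I would recall that for \'etale cohomology with supports there is a long exact sequence relating $H^*_{X_\gamma}(A_\gamma,F)$, $H^*(A_\gamma,F)$ and $H^*(A_\gamma\setminus X_\gamma,F)=H^*(B_\gamma,F)$, and similarly at each finite level $\epsilon$; so it suffices to prove the corresponding passage-to-the-limit statement for ordinary \'etale cohomology of the paracompact spaces $A_\gamma$ and $B_\gamma$ written as increasing unions of compact domains, and then conclude by the five lemma applied to the morphism of long exact sequences. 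This reduces Lemma~\ref{lem4.1} to a continuity property of \'etale cohomology of Berkovich spaces along a nested exhaustion by compact analytic domains.

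Next I would invoke the relevant continuity result from \cite{Ber1}: for a paracompact $K$-analytic space $Y$ which is the increasing union of compact analytic domains $Y_\epsilon$, and an abelian sheaf $F$ on $Y$, one has $H^m(Y,F)\cong \varprojlim_\epsilon H^m(Y_\epsilon,F|_{Y_\epsilon})$ provided the relevant Mittag-Leffler condition holds. In our situation the restriction maps in the tower are the natural ones induced by the inclusions $A_{\gamma,\epsilon}\hookrightarrow A_{\gamma,\epsilon'}$ for $\epsilon<\epsilon'$; because the $A_{\gamma,\epsilon}$ are compact and the sheaf (ultimately $\mathbb Z/\ell^n\mathbb Z$ or a constructible sheaf pulled back from it) has finite stalks, the groups $H^m(A_{\gamma,\epsilon},F)$ are finite, so the Mittag-Leffler condition is automatic and the $\varprojlim^1$ term vanishes. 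Then $H^m(A_\gamma,F)\cong \varprojlim_\epsilon H^m(A_{\gamma,\epsilon},F)$ and likewise $H^m(B_\gamma,F)\cong \varprojlim_\epsilon H^m(B_{\gamma,\epsilon},F)$, where one must check $B_\gamma=\bigcup_\epsilon B_{\gamma,\epsilon}$ is again a legitimate exhaustion of a paracompact space by compact domains — which follows since $B_{\gamma,\epsilon}=\{|x|=\gamma^{-1},\ |y|\le\gamma\epsilon,\ |z|\le\epsilon,\ f=t\}$ is the boundary piece cut out by closed conditions inside the compact $A_{\gamma,\epsilon}$.

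Finally I would assemble the argument: write down the morphism of long exact sequences of cohomology with supports,
\begin{align*}
\cdots\to H^m_{X_\gamma}(A_\gamma,F)\to H^m(A_\gamma,F)\to H^m(B_\gamma,F)\to H^{m+1}_{X_\gamma}(A_\gamma,F)\to\cdots
\end{align*}
mapping to the inverse limit over $\epsilon$ of the analogous sequences for $(A_{\gamma,\epsilon},X_{\gamma,\epsilon},B_{\gamma,\epsilon})$. Since $\varprojlim$ is exact on towers of finite groups (all $\varprojlim^1$ vanish), the limit of the exact sequences is exact, and by the previous paragraph the middle and right vertical arrows are isomorphisms; the five lemma then forces the left arrow $H^m_{X_\gamma}(A_\gamma,F)\to\varprojlim_\epsilon H^m_{X_{\gamma,\epsilon}}(A_{\gamma,\epsilon},F)$ to be an isomorphism for all $m\ge 1$, which is the claim. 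I expect the main obstacle to be the careful justification of the continuity statement $H^m(Y,F)\cong\varprojlim H^m(Y_\epsilon,F)$ for \'etale cohomology of Berkovich spaces along an exhaustion by compact domains — one has to be precise about which version of $H^*$ is meant (the $\ell$-adic groups defined as $\varprojlim_n H^*(\,\cdot\,,\mathbb Z/\ell^n)$ in Subsection on \'etale cohomology), and about the compatibility of the two inverse limits (over $n$ and over $\epsilon$), but the finiteness of all the groups involved makes the double limit harmless and lets Mittag-Leffler do the work.
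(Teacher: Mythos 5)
Your proof is correct for the sheaves that actually appear in the sequel (finite constant sheaves $\mathbb Z/\ell^n\mathbb Z$, and the $\ell$-adic cohomology built from them), and it takes a genuinely different route from the paper's. The paper works at the level of an injective resolution $0\to F\to J^\bullet$: it identifies $H^m_{X_\gamma}(A_\gamma,F)$ with the cohomology of the complex of kernels $\ker\bigl(J^\bullet(A_\gamma)\to J^\bullet(B_\gamma)\bigr)$, observes that the restriction maps from these kernels to the finite-level kernels $\ker\bigl(J^\bullet(A_{\gamma,\epsilon})\to J^\bullet(B_{\gamma,\epsilon})\bigr)$ are surjective, and then re-runs the \emph{argument} of \cite[Lem.~6.3.12]{Ber1} directly on that tower of complexes, inserting a Mittag-Leffler stabilization condition on the images of the $H^{m-1}_{X_{\gamma,\epsilon'}}$ in $H^{m-1}_{X_{\gamma,\epsilon}}$ which it asserts (with a pointer to \cite[Lem.~7.4]{Ber3}) rather than proves. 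You instead quote \cite[Lem.~6.3.12]{Ber1} as a black box for the two paracompact spaces $A_\gamma$ and $B_\gamma$ exhausted by the compacts $A_{\gamma,\epsilon}$, $B_{\gamma,\epsilon}$, and transport the conclusion to cohomology with supports via the long exact sequence and the five lemma, with the exactness of $\varprojlim_\epsilon$ on the towers secured by finiteness of the groups on compact domains. This is more modular and makes the role of the continuity lemma transparent; the cost is that the finiteness used to kill $\varprojlim^1$ genuinely requires $F$ to be constructible (or at least to have finite cohomology on the compact $A_{\gamma,\epsilon}$, $B_{\gamma,\epsilon}$), whereas the lemma is stated for an arbitrary $F\in\mathbf S(A_\gamma)$. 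That gap, however, is shared with the paper: the paper's Mittag-Leffler condition is likewise only asserted, and the lemma is only ever invoked for $\mathbb Z/\ell^n\mathbb Z$ and its $\ell$-adic limit, where both arguments go through. If you wanted to match the stated generality you would have to justify the Mittag-Leffler condition along the exhaustion without appealing to finiteness, which is essentially the extra step the paper gestures at via \cite[Lem.~7.4]{Ber3}.
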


\begin{proof}
The functors $H^m_{X_{\gamma}}(A_{\gamma},-)$ are the derived functors of the global section functor $H^0_{X_{\gamma}}(A_{\gamma},-)$ defined by 
$$H^0_{X_{\gamma}}(A_{\gamma},F)=\ker(F(A_{\gamma})\to F(B_{\gamma})),$$
the kernel of the restriction homomorphism $F(A_{\gamma})\to F(B_{\gamma})$. Note that if $J$ is an injective abelian sheaf then the pullback of $J$ on $B_{\gamma}$ is acyclic and the homomorphism $J(A_{\gamma})\to J(B_{\gamma})$ is surjective. Take an injective resolution of $F$, namely $0\to F\to J^0\to J^1\to\cdots$, and consider the following commutative diagram
$$
\begin{CD}
0 \longrightarrow \ker(\alpha_0)@>d^0>> \ker(\alpha_1)@>d^1>> \ker(\alpha_2)&@>d^2>>\cdots\\
 @VVV @VVV @VVV\\
0 \longrightarrow J^0(A_{\gamma})@>d_{A_{\gamma}}^0>> J^1(A_{\gamma})@>d_{A_{\gamma}}^1>> J^2(A_{\gamma})&@>d_{A_{\gamma}}^2>>\cdots\\ 
 @VV\alpha_0V @VV\alpha_1V @VV\alpha_2V\\
0 \longrightarrow J^0(B_{\gamma})@>d_{B_{\gamma}}^0>> J^1(B_{\gamma})@>d_{B_{\gamma}}^1>> J^2(B_{\gamma})&@>d_{B_{\gamma}}^2>>\cdots
\end{CD}
$$
Then we have 
$$H^m_{X_{\gamma}}(A_{\gamma},F)=\ker\big(H^m(A_{\gamma},F)\to H^m(B_{\gamma},F)\big)\cong \ker(d^m)/\text{im}(d^{m-1}).$$

Analogously, we consider the surjections, say, $\alpha_{m,\epsilon}: J^m(A_{\gamma,\epsilon})\to J^m(B_{\gamma,\epsilon})$. There is a commutative diagram as follows, in which every vertical arrow is surjective,
$$
\begin{CD}
0 \longrightarrow \ker(\alpha_0)@>d^0>> \ker(\alpha_1)@>d^1>> \ker(\alpha_2)&@>d^2>>\cdots\\
 @VVV @VVV @VVV\\
0 \longrightarrow \ker(\alpha_{0,\epsilon})@>d_{\epsilon}^0>> \ker(\alpha_{1,\epsilon})@>d_{\epsilon}^1>> \ker(\alpha_{2,\epsilon})&@>d_{\epsilon}^2>>\cdots 
\end{CD}
$$
Here $H^m_{X_{\gamma,\epsilon}}(A_{\gamma,\epsilon},F)\cong \ker(d_{\epsilon}^m)/\text{im}(d_{\epsilon}^{m-1})$. Then we can use the arguments of \cite[Lemma 6.3.12]{Ber1} to complete the proof. Note that in this situation the following condition is satisfied: For any $0<\epsilon<1$, for any $\epsilon<\epsilon',\epsilon^{\prime\prime}<1$, the image of $H^{m-1}_{X_{\gamma,\epsilon'}}(A_{\gamma,\epsilon'},F)$ and that of $H^{m-1}_{X_{\gamma,\epsilon^{\prime\prime}}}(A_{\gamma,\epsilon^{\prime\prime}},F)$ coincide in $H^{m-1}_{X_{\gamma,\epsilon}}(A_{\gamma,\epsilon},F)$ under the restriction homomorphisms (see \cite[Lemma 7.4]{Ber3} for a similar argument).
\end{proof}

Here is an important corollary of (\ref{eq10}) and Lemma \ref{lem4.1}.

\begin{corollary}\label{cor4.2}
There is a canonical quasi-isomorphism of complexes
$$R\Gamma_c(\mathbb A_{\kappa}^{d_1},R\psi_{\widehat{f}}\mathbb Q_{\ell}|_{\mathbb A_{\kappa}^{d_1}})\stackrel{\text{qis}}{\to}\projlim_{\epsilon\to 1}R\Gamma_{X_{\gamma,\epsilon}}(A_{\gamma,\epsilon},\mathbb Q_{\ell}).$$
\end{corollary}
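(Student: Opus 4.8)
The plan is to combine the quasi-isomorphism (\ref{eq10}) of Corollary \ref{Bercor}(i) with the inverse-limit formula of Lemma \ref{lem4.1}, but to do so at the level of complexes rather than merely cohomology groups. First I would observe that $R\Gamma_{X_\gamma}(A_\gamma,\mathbb Q_\ell)$ is, by definition and by the proof of Lemma \ref{lem4.1}, computed by the complex $K^\bullet:=\ker(J^\bullet(A_\gamma)\to J^\bullet(B_\gamma))$ associated to an injective resolution $0\to\mathbb Q_\ell\to J^\bullet$ on $A_\gamma$; likewise $R\Gamma_{X_{\gamma,\epsilon}}(A_{\gamma,\epsilon},\mathbb Q_\ell)$ is represented by $K^\bullet_\epsilon:=\ker(J^\bullet(A_{\gamma,\epsilon})\to J^\bullet(B_{\gamma,\epsilon}))$, the restrictions of the same resolution. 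The restriction maps $K^\bullet\to K^\bullet_\epsilon$ and $K^\bullet_{\epsilon'}\to K^\bullet_\epsilon$ for $\epsilon<\epsilon'$ are genuine maps of complexes, so they assemble into a map $K^\bullet\to\plim_{\epsilon\to 1}K^\bullet_\epsilon$ in the derived category; unwinding, this is exactly a canonical morphism $R\Gamma_{X_\gamma}(A_\gamma,\mathbb Q_\ell)\to\plim_{\epsilon\to 1}R\Gamma_{X_{\gamma,\epsilon}}(A_{\gamma,\epsilon},\mathbb Q_\ell)$, and composing with (\ref{eq10}) gives the asserted morphism.

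Next I would verify it is a quasi-isomorphism by computing cohomology. Because $A_\gamma$ is paracompact and is the increasing union of the compact domains $A_{\gamma,\epsilon}$ (with $B_\gamma$ the corresponding union of the $B_{\gamma,\epsilon}$), taking the inverse limit of the short exact sequences $0\to K^\bullet_\epsilon\to J^\bullet(A_{\gamma,\epsilon})\to J^\bullet(B_{\gamma,\epsilon})\to 0$ and invoking the Mittag-Leffler condition supplied in Lemma \ref{lem4.1} (the stabilization of images of the $H^{m-1}_{X_{\gamma,\epsilon}}$) shows that $\plim$ is exact on this system, so $H^m(\plim_\epsilon K^\bullet_\epsilon)=\plim_\epsilon H^m(K^\bullet_\epsilon)=\plim_\epsilon H^m_{X_{\gamma,\epsilon}}(A_{\gamma,\epsilon},\mathbb Q_\ell)$. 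By Lemma \ref{lem4.1} the right-hand side is canonically $H^m_{X_\gamma}(A_\gamma,\mathbb Q_\ell)$, and chasing through the identifications shows the natural map above induces precisely this isomorphism on every $H^m$. Hence the morphism of complexes is a quasi-isomorphism, and precomposing with the quasi-isomorphism (\ref{eq10}) yields the statement.

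The main obstacle I anticipate is bookkeeping rather than a deep new idea: one must check that the inverse-limit formula of Lemma \ref{lem4.1} — which as stated is an isomorphism of individual cohomology groups — can be promoted to an honest quasi-isomorphism of complexes compatible with the derived-category structures, i.e. that the comparison map I constructed at the chain level is the ``right'' one and that the $\plim$ of the chosen injective resolutions genuinely computes $R\Gamma$ of the limit. This requires the exactness of $\plim$ on the relevant towers, which is why the Mittag-Leffler condition highlighted at the end of the proof of Lemma \ref{lem4.1} is essential; with $\mathbb Q_\ell$-coefficients (a filtered colimit of the finite systems $\mathbb Z/\ell^n$) one also has to make sure the limit over $\epsilon$ and the limit over $n$ interact harmlessly, but since $R\Gamma_c$ and $R\Gamma_{X}$ here are defined by first taking $\plim_n$ and then $\otimes\mathbb Q_\ell$, and since $\mathbb Q_\ell$ is flat over $\mathbb Z_\ell$, this causes no trouble. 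Everything else is a routine diagram chase through the two commutative diagrams already displayed in the proof of Lemma \ref{lem4.1}.
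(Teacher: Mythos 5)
Your argument is correct and follows essentially the same route as the paper's: both constructions produce the canonical comparison map by starting from (\ref{eq10}) and appealing to the universality of $\projlim$, and both then invoke Lemma \ref{lem4.1} (whose proof is itself where the Mittag--Leffler stabilization lives) to see that the map is a quasi-isomorphism. The only cosmetic difference is that you realize $R\Gamma_{X_\gamma}(A_\gamma,\mathbb Q_\ell)$ and the $R\Gamma_{X_{\gamma,\epsilon}}(A_{\gamma,\epsilon},\mathbb Q_\ell)$ concretely as kernel complexes of an injective resolution, whereas the paper packages the same data as $\text{Cone}(Rf^{\gamma}_*\mathbb Q_\ell\to R(f^\gamma|_{B_\gamma})_*\mathbb Q_\ell)$ and commutes $\text{Cone}$ past $\projlim$; your extra remark on the interaction between $\projlim_n$ (defining $\mathbb Q_\ell$-cohomology) and $\projlim_\epsilon$ is a sound clarification that the paper leaves tacit.
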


\begin{proof}
We deduce from (\ref{eq10}) and properties of the mapping cone functor that
\begin{align*}
R\Gamma_c(\mathbb A_{\kappa}^{d_1},R\psi_{\widehat{f}}\mathbb Q_{\ell}|_{\mathbb A_{\kappa}^{d_1}})&\stackrel{\text{qis}}{\to} R\Gamma_{X_{\gamma}}(A_{\gamma},\mathbb Q_{\ell})\\
&\cong Rf^{\gamma}_*\text{Cone}\Big(\mathbb Q_{\ell}\to i_{B_{\gamma},A_{\gamma}*}i_{B_{\gamma},A_{\gamma}}^*\mathbb Q_{\ell}\Big)\\
&\cong \text{Cone}\Big(Rf^{\gamma}_*\mathbb Q_{\ell}\to R(f^{\gamma}|_{B_{\gamma}})_*\mathbb Q_{\ell}\Big).
\end{align*}
By the universality of the projective limit, there are canonical morphisms
\begin{align*}
Rf^{\gamma}_*\mathbb Q_{\ell}&\to \projlim_{\epsilon\to 1}Rf^{\gamma,\epsilon}_*\mathbb Q_{\ell},\\
R(f^{\gamma}|_{B_{\gamma}})_*\mathbb Q_{\ell}&\to \projlim_{\epsilon\to 1}R(f^{\gamma,\epsilon}|_{B_{\gamma,\epsilon}})_*\mathbb Q_{\ell}.
\end{align*}
Here, the latter is induced from the former by restriction. Thus there is a canonical morphism of complexes
\begin{align*}
R\Gamma_c(\mathbb A_{\kappa}^{d_1},R\psi_{\widehat{f}}\mathbb Q_{\ell}|_{\mathbb A_{\kappa}^{d_1}})&\to \text{Cone}\Big(\projlim_{\epsilon\to 1}Rf^{\gamma,\epsilon}_*\mathbb Q_{\ell}\to \projlim_{\epsilon\to 1}R(f^{\gamma,\epsilon}|_{B_{\gamma,\epsilon}})_*\mathbb Q_{\ell}\Big)\\
&\cong \projlim_{\epsilon\to 1}\text{Cone}\Big(Rf^{\gamma,\epsilon}_*\mathbb Q_{\ell}\to R(f^{\gamma,\epsilon}|_{B_{\gamma,\epsilon}})_*\mathbb Q_{\ell}\Big)\\
&\cong \projlim_{\epsilon\to 1}R\Gamma_{X_{\gamma,\epsilon}}(A_{\gamma,\epsilon},\mathbb Q_{\ell}).
\end{align*}
This morphism of complexes in fact induces the cohomological isomorphisms in Lemma \ref{lem4.1}.
\end{proof}

The second part of Corollary \ref{Bercor} asserts that 
\begin{align}\label{eq11}
(R\psi_{\widehat{f}_{\mathfrak Z}}\mathbb Q_{\ell})_0\stackrel{\text{qis}}{\to} R\Gamma(D,\mathbb Q_{\ell}).
\end{align}
The space $D$ is open and locally compact, which is covered by the compact domains $D_{\epsilon}=\{z\in\mathbb A_{\widehat{K^s},\text{Ber}}^r: |z|\leq \epsilon, f(0,0,z)=t\}$, for $\epsilon\in |(K^s)^*|\cap (0,1)$. By \cite[Lem. 6.3.12]{Ber1}, there is a canonical isomorphism of cohomology groups
$$H^m(D,\mathbb Q_{\ell})\cong \projlim_{\epsilon\to 1} H^m(D_{\epsilon},\mathbb Q_{\ell})$$
for any $m\geq 0$. Thus by the same arguments as in the proof of Corollary \ref{cor4.2}, one deduces from (\ref{eq11}) that
\begin{align}\label{eq70}
(R\psi_{\widehat{f}_{\mathfrak Z}}\mathbb Q_{\ell})_0\stackrel{\text{qis}}{\to} \projlim_{\epsilon\to 1} R\Gamma(D_{\epsilon},\mathbb Q_{\ell}).
\end{align}
(Compare this with \cite[Lem. 7.4]{Ber3}.)


\subsection{Using K\"{u}nneth isomorphism} 
We now use the K\"{u}nneth isomorphism for cohomology with compact support mentioned in Proposition \ref{prop2.2}, (iii). To begin, we write $A_{\gamma,\epsilon}$ as a disjoint union $A_{\gamma,\epsilon}=A^0_{\gamma,\epsilon}\sqcup A^1_{\gamma,\epsilon}$ of analytic spaces 
\begin{align*}
A^0_{\gamma,\epsilon}&:=\{(x,y,z)\in A_{\gamma,\epsilon}: |x||y|=0\},\\
A^1_{\gamma,\epsilon}&:=\{(x,y,z)\in A_{\gamma,\epsilon}: |x||y|\not=0\}.
\end{align*}
Similarly, one can write $X_{\gamma,\epsilon}$ as a disjoint union of analytic spaces 
\begin{align*}
X^0_{\gamma,\epsilon}&:=\{(x,y,z)\in X_{\gamma,\epsilon}: |x||y|=0\},\\
X^1_{\gamma,\epsilon}&:=\{(x,y,z)\in X_{\gamma,\epsilon}: |x||y|\not=0\}.
\end{align*} 

Observe that we can write $X^0_{\gamma,\epsilon}$ as the product $Y^0_{\gamma,\epsilon}\times D_{\epsilon}$ with $D_{\epsilon}$ as in Subsection \ref{AXD} and $Y^0_{\gamma,\epsilon}:=\{(x,y)\in \mathbb A_{\widehat{K^s},\text{Ber}}^{d_1+d_2}: |x||y|=0, |x|< \gamma^{-1}, |y|\leq \gamma\epsilon\}$. By the compactness of $A^0_{\gamma,\epsilon}$, $D_{\epsilon}$, and by the K\"{u}nneth isomorphism, we have
\begin{align}\label{eq12}
R\Gamma_{X^0_{\gamma,\epsilon}}(A^0_{\gamma,\epsilon},\mathbb Q_{\ell}) \cong R\Gamma_c(X^0_{\gamma,\epsilon},\mathbb Q_{\ell})&\stackrel{\text{qis}}{\to} R\Gamma_c(Y^0_{\gamma,\epsilon}, \mathbb Q_{\ell})\otimes R\Gamma_c(D_{\epsilon},\mathbb Q_{\ell})\notag\\
&\stackrel{\text{qis}}{\to} R\Gamma_c(Y^0_{\gamma,\epsilon}, \mathbb Q_{\ell})\otimes R\Gamma(D_{\epsilon},\mathbb Q_{\ell}).
\end{align}

Decompose $Y^0_{\gamma,\epsilon}$ into a disjoint union of $Y^{0,1}_{\gamma,\epsilon}:=\{(x,0)\in \mathbb A_{\widehat{K^s},\text{Ber}}^{d_1+d_2}: |x|< \gamma^{-1}\}$ and $Y^{0,2}_{\gamma,\epsilon}:=\{(0,y)\in \mathbb A_{\widehat{K^s},\text{Ber}}^{d_1+d_2}: 0<|y|\leq \gamma\epsilon\}$.

\begin{lemma}\label{lem5.1}
(i) $R\Gamma_c(\mathbb A_{\kappa}^{d_1}, \mathbb Q_{\ell}) \stackrel{\text{qis}}{\to} R\Gamma_c(Y^{0,1}_{\gamma,\epsilon} ,\mathbb Q_{\ell})$; \qquad (ii) $R\Gamma_c(Y^{0,2}_{\gamma,\epsilon} ,\mathbb Q_{\ell})\stackrel{\text{qis}}{\to} 0$; 

\hspace*{2.28cm} (iii) $R\Gamma_c(\mathbb A_{\kappa}^{d_1}, \mathbb Q_{\ell})\stackrel{\text{qis}}{\to} R\Gamma_c(Y^0_{\gamma,\epsilon}, \mathbb Q_{\ell})$.
\end{lemma}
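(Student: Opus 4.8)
The plan is to prove the three statements about the analytic spaces $Y^{0,1}_{\gamma,\epsilon}$, $Y^{0,2}_{\gamma,\epsilon}$ and $Y^0_{\gamma,\epsilon}$ by directly identifying these Berkovich analytic spaces with familiar objects and then invoking the localization sequence of Proposition \ref{prop2.2}(i). The key observation is that $Y^{0,1}_{\gamma,\epsilon}$ is the $\widehat{K^s}$-analytic space $\{x\in\mathbb A_{\widehat{K^s},\text{Ber}}^{d_1}: |x|<\gamma^{-1}\}$, i.e.\ an open polydisc of polyradius $\gamma^{-1}>1$; since $\gamma^{-1}$ lies in the value group and exceeds $1$, this open polydisc is isomorphic (via rescaling coordinates by an element of $(K^s)^\times$ of absolute value $\gamma^{-1}$, exactly as in the proof of the description of $A$) to the open unit polydisc $D^{d_1}(0;1)$, which is the analytification $\mathbb A_{\widehat{K^s}}^{d_1,\text{an}}$ of affine space. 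The homotopy invariance / comparison results for \'etale cohomology with compact support of Berkovich spaces then give $R\Gamma_c(Y^{0,1}_{\gamma,\epsilon},\mathbb Q_\ell)\cong R\Gamma_c(\overline{\mathbb A^{d_1}},\mathbb Q_\ell)$, which we write on the $\kappa$-side simply as $R\Gamma_c(\mathbb A_\kappa^{d_1},\mathbb Q_\ell)$; this is (i).

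For (ii), I would note that $Y^{0,2}_{\gamma,\epsilon}=\{y\in\mathbb A_{\widehat{K^s},\text{Ber}}^{d_2}: 0<|y|\le\gamma\epsilon\}$, which is a \emph{punctured} closed polydisc: the closed polydisc of polyradius $\gamma\epsilon$ with the single point $y=0$ removed. The cohomology with compact support is computed from the excision/localization triangle relating the closed polydisc $E^{d_2}(0;\gamma\epsilon)$ (which is contractible, so has the $\mathbb Q_\ell$-cohomology of a point) to the closed point $\{0\}$ and its open complement $Y^{0,2}_{\gamma,\epsilon}$. Concretely, applying Proposition \ref{prop2.2}(i) with $Y=E^{d_2}(0;\gamma\epsilon)$, $U=Y^{0,2}_{\gamma,\epsilon}$, $V=\{0\}$ gives a long exact sequence in which both $H_c^*(\overline{Y},\mathbb Q_\ell)$ and $H_c^*(\overline V,\mathbb Q_\ell)$ are $\mathbb Q_\ell$ concentrated in degree $0$, and the connecting map $H_c^0(\overline{V})\to H_c^1(\overline U)$ together with $H_c^0(\overline U)\to H_c^0(\overline Y)$ forces $H_c^*(\overline{Y^{0,2}_{\gamma,\epsilon}},\mathbb Q_\ell)=0$ in every degree (the map $H_c^0(\overline U)\to H_c^0(\overline Y)\cong\mathbb Q_\ell$ must be an isomorphism since $\overline U$ is connected and non-compact, hence $H_c^0(\overline U)=0$, and inductively everything vanishes). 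This is the one place where one needs to be slightly careful about whether the relevant polydisc is open or closed and about the degree bookkeeping, so I would phrase it as a clean application of the exact sequence rather than a hands-on computation.

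Finally, (iii) follows formally by combining (i) and (ii): since $Y^0_{\gamma,\epsilon}=Y^{0,1}_{\gamma,\epsilon}\sqcup Y^{0,2}_{\gamma,\epsilon}$ is a disjoint union of analytic spaces, we have $R\Gamma_c(Y^0_{\gamma,\epsilon},\mathbb Q_\ell)\cong R\Gamma_c(Y^{0,1}_{\gamma,\epsilon},\mathbb Q_\ell)\oplus R\Gamma_c(Y^{0,2}_{\gamma,\epsilon},\mathbb Q_\ell)$ (additivity of compactly supported cohomology for disjoint unions, again a special case of the localization sequence when $V$ is open and closed), and by (ii) the second summand is acyclic, so the composite quasi-isomorphism from (i) gives $R\Gamma_c(\mathbb A_\kappa^{d_1},\mathbb Q_\ell)\stackrel{\text{qis}}{\to}R\Gamma_c(Y^0_{\gamma,\epsilon},\mathbb Q_\ell)$.

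The main obstacle I anticipate is not conceptual but a matter of correctly setting up the geometry in part (i): one must verify that the open polydisc of polyradius $\gamma^{-1}$ with $\gamma^{-1}>1$ is genuinely isomorphic to $\mathbb A^{d_1,\text{an}}_{\widehat{K^s}}$ — this is false for a fixed finite radius but becomes true after using that $\gamma^{-1}$ can be realized as the absolute value of an element of $(K^s)^\times$, exactly the scaling trick already used in the description of $A$ — and then that compactly supported \'etale cohomology of affine analytic space agrees with that of the algebraic $\mathbb A_\kappa^{d_1}$. Both of these are standard consequences of the comparison results cited from Berkovich, so once the identifications are in place the proof is essentially the exact-sequence manipulation sketched above. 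I should make sure the disjoint-union decomposition $A^0_{\gamma,\epsilon}=\cdots$ used just before the lemma is consistent with $Y^0_{\gamma,\epsilon}=Y^{0,1}_{\gamma,\epsilon}\sqcup Y^{0,2}_{\gamma,\epsilon}$, i.e.\ that the locus $|x||y|=0$ with $x=y=0$ is either absorbed into one of the pieces or genuinely contributes only a point whose contribution to compactly supported cohomology cancels, which is consistent with the vanishing statement in (ii).
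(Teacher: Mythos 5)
Your parts (ii) and (iii) follow essentially the paper's own line of argument: for (ii) the paper likewise compares the closed polydisc with the point via \cite[Cor.~4.3.2]{Ber1} and then invokes the localization sequence of Proposition~\ref{prop2.2}(i) to conclude $H_c^*(Y^{0,2}_{\gamma,\epsilon})=0$, and (iii) is formal from (i), (ii) and the closed/open decomposition $Y^0_{\gamma,\epsilon}=Y^{0,1}_{\gamma,\epsilon}\sqcup Y^{0,2}_{\gamma,\epsilon}$. One nitpick there: the two pieces are not \emph{both} open and closed in $Y^0_{\gamma,\epsilon}$ --- $Y^{0,1}_{\gamma,\epsilon}$ (the locus $y=0$, which contains the origin) is closed and $Y^{0,2}_{\gamma,\epsilon}$ is its open complement --- so what you really use is the localization triangle together with the vanishing from (ii), not ``additivity for disjoint unions of clopen pieces''; this doesn't affect the conclusion.

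Part (i), however, contains a genuine error. After rescaling, $Y^{0,1}_{\gamma,\epsilon}$ is indeed isomorphic to the open unit polydisc $D^{d_1}(0;1)$, but $D^{d_1}(0;1)$ is \emph{not} the analytification $\mathbb A_{\widehat{K^s}}^{d_1,\text{an}}$ of affine space: the analytification is the unbounded union of all closed polydiscs of arbitrarily large radius, whereas $D^{d_1}(0;1)$ is a bounded, relatively compact open subdomain of $E^{d_1}(0;1)$, and the two are not isomorphic as $\widehat{K^s}$-analytic spaces (the non-archimedean analogue of the fact that the open unit disc is not biholomorphic to $\mathbb C$). So the chain ``$Y^{0,1}_{\gamma,\epsilon}\cong D^{d_1}(0;1)\cong\mathbb A^{d_1,\text{an}}$, now apply the comparison theorem'' does not go through; the appeal to ``homotopy invariance / comparison results'' is exactly the missing content. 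This is where the paper does real work: it sets $V=\mathbb A_{K^s}^{d_1,\text{an}}\setminus Y^{0,1}_{\gamma,\epsilon}$, covers $V$ by open shells $\mathscr V_i=\{x:\gamma^{-1}\le|x|<\gamma_i\}$ and their pairwise intersections by similar open shells, shows each $H_c^*(\mathscr V_i,F)$ and $H_c^*(\mathscr V_{ijl},F)$ vanishes, and feeds this into a \v Cech/localization argument to get $H_c^*(V,F)=0$; then the localization sequence identifies $H_c^*(Y^{0,1}_{\gamma,\epsilon},F)$ with $H_c^*(\mathbb A_{K^s}^{d_1,\text{an}},F)$, and the comparison theorem \cite[Thm.~7.1.1]{Ber1} plus SGA$4\tfrac12$ \cite[Cor.~3.3]{SGA} bring this back to $H_c^*(\mathbb A_\kappa^{d_1},F)$. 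To repair your (i) you need to supply this vanishing argument for the complement (or an equivalent substitute); the space-level identification you invoke is false and cannot be used.
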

 
\begin{proof}
(i) For notational simplicity, let $F$ denote both constant sheaves $\mathbb Z/\ell^n\mathbb Z$ on $\mathbb A_{K^s}^{d_1}$ and on $\mathbb A_{K^s}^{d_1,\text{an}}=\mathbb A_{\widehat{K^s},\text{Ber}}^{d_1}$. The comparison theorem for cohomology with compact support \cite[Thm. 7.1.1]{Ber1} gives an isomorphism of groups
\begin{align}\label{ss}
H_c^m(\mathbb A_{K^s}^{d_1}, F)\cong H_c^m(\mathbb A_{K^s}^{d_1,\text{an}},F),
\end{align}
for any $m\geq 0$. Let $V=\mathbb A_{K^s}^{d_1,\text{an}}\setminus Y^{0,1}_{\gamma,\epsilon}$. By Proposition 5.2.6 (ii) of \cite{Ber1} (notice that Proposition \ref{prop2.2} (ii) is the $\ell$-adic version of this result), we have an exact sequence 
\begin{align}\label{sequence}
\cdots\to H_c^m(V,F)\to H_c^{m+1}(Y^{0,1}_{\gamma,\epsilon},F)\to H_c^{m+1}(\mathbb A_{K^s}^{d_1,\text{an}},F)\to H_c^{m+1}(V,F)\to\cdots.
\end{align}
We shall prove that $H_c^m(V,F)=0$ for every $m$. 

Let us choose an {\it open} covering $\{\mathscr V_i\}_{i\in \mathbb N}$ of $V=\mathbb A_{K^s}^{d_1,\text{an}}\setminus Y^{0,1}_{\gamma,\epsilon}$ defined as follows:
$$\mathscr V_i:=\{x\in \mathbb A_{K^s}^{d_1,\text{an}}: \gamma^{-1}\leq |x|<\gamma_i\},$$
where $\gamma^{-1}<\gamma_i<\gamma_j$ for every $i<j$. Choose an analogous {\it open} covering $\{\mathscr V_{ijl}\}_{l\in \mathbb N}$ of $\mathscr V_i\cap \mathscr V_j$ for each pair $i,j$. Let $\alpha_i$ and $\alpha_{ijl}$ be the open embeddings $\mathscr V_i\to V$ and $\mathscr V_{ijl}\to V$, respectively. Then the following exact sequence 
$$\bigoplus_{i,j,l}\alpha_{ijl!}(F_{\mathscr V_{ijl}})\to \bigoplus_i\alpha_{i!}(F_{\mathscr V_i})\to F_V\to 0$$
induces a exact sequence
$$
\bigoplus_{i,j,l}H_c^m(\mathscr V_{ijl},F)\to \bigoplus_iH_c^m(\mathscr V_i,F)\to H_c^m(V,F)\to 0.
$$
The \'etale cohomology groups with compact support $H_c^m(\mathscr V_{ijl},F)$ and $H_c^m(\mathscr V_i,F)$ clearly vanish for $m\geq 0$, 
thus $H_c^m(V,F)=0$ for $m\geq 0$. By (\ref{sequence}), one has $H_c^m(\mathbb A_{K^s}^{d_1,\text{an}},F)\cong H_c^m(Y^{0,1}_{\gamma,\epsilon},F)$ for $m\geq 0$, which together with (\ref{ss}) implies that $H_c^m(\mathbb A_{K^s}^{d_1}, F)\cong H_c^m(Y^{0,1}_{\gamma,\epsilon},F)$ for $m\geq 0$. Now, since $\kappa$ is algebraically closed and $K^s$ is separably closed (for fields of characteristic zero the concepts ``algebraically closed'' and ``separably closed'' coincide), applying a result of SGA$4\frac 1 2$ \cite[Cor. 3.3]{SGA}, for $m\geq 0$, $H_c^m(\mathbb A_{\kappa}^{d_1},F)\cong H_c^m(\mathbb A_{K^s}^{d_1},F)$. Therefore 
$$H_c^m(\mathbb A_{\kappa}^{d_1},F)\cong H_c^m(Y^{0,1}_{\gamma,\epsilon},F), \ m\geq 0,$$
hence the $\ell$-adic version, namely, $H_c^m(\mathbb A_{\kappa}^{d_1},\mathbb Q_{\ell})\cong H_c^m(Y^{0,1}_{\gamma,\epsilon},\mathbb Q_{\ell})$ for $m\geq 0$.

\medskip
(ii) Let us denote by $F$ the constant sheaf $\mathbb Z/\ell^n\mathbb Z$, and consider the closed immersion $\mathcal M(\widehat{K^s})\to \mathcal M(\widehat{K^s}\{\gamma^{-1}y\})$ of $\widehat{K^s}$-analytic spaces. By \cite[Cor. 4.3.2]{Ber1}, there is an isomorphism of groups 
$$H^m(\mathcal M(\widehat{K^s}),F)\cong H^m(\mathcal M(\widehat{K^s}\{\gamma^{-1}y\}),F)$$
for each $m\geq 0$. This leads an isomorphism of groups in the $\ell$-adic cohomology. Thus using the exact sequence in Proposition \ref{prop2.2} (ii), we have $H_c(Y^{0,2}_{\gamma,\epsilon},\mathbb Q_{\ell})=0$. 

\medskip
(iii) follows from (i) and (ii).
\end{proof}

\subsection{The final step of the proof}
The aim of this subsection is to prove the following  
\begin{align}\label{eq80}
R\Gamma_{X_{\gamma,\epsilon}}(A_{\gamma,\epsilon},\mathbb Q_{\ell})\stackrel{\text{qis}}{\to}  R\Gamma_{X^0_{\gamma,\epsilon}}(A^0_{\gamma,\epsilon},\mathbb Q_{\ell}).
\end{align}
Assume the quasi-isomorphism (\ref{eq80}). Then there are quasi-isomorphisms of complexes, due to Corollary \ref{cor4.2}, (\ref{eq80}), (\ref{eq12}) and Lemma \ref{lem5.1},
\begin{align*}
R\Gamma_c(\mathbb A_{\kappa}^{d_1},R\psi_{\widehat{f}}\mathbb{Q}_{\ell}|_{\mathbb A_{\kappa}^{d_1}})&\stackrel{\text{qis}}{\to} \projlim_{\epsilon\to 1} \Big(R\Gamma_c(\mathbb A_{\kappa}^{d_1}, \mathbb Q_{\ell}){\otimes} R\Gamma(D_{\epsilon},\mathbb Q_{\ell})\Big)\\
&\stackrel{\text{qis}}{\to} R\Gamma_c(\mathbb A_{\kappa}^{d_1}, \mathbb Q_{\ell}){\otimes} \projlim_{\epsilon\to 1} R\Gamma(D_{\epsilon},\mathbb Q_{\ell}).
\end{align*}
This together with (\ref{eq70}) implies Theorem \ref{maintheorem}.

\medskip
To process a proof for (\ref{eq80}), we write $R\Gamma_{X_{\gamma,\epsilon}}(A_{\gamma,\epsilon},\mathbb Q_{\ell})$ and $R\Gamma_{X^0_{\gamma,\epsilon}}(A^0_{\gamma,\epsilon},\mathbb Q_{\ell})$ in the following form: 
\begin{align*}
R\Gamma_{X_{\gamma,\epsilon}}(A_{\gamma,\epsilon},\mathbb Q_{\ell})&\stackrel{\text{qis}}{\to} Rf^{\gamma,\epsilon}_*\text{Cone} (\mathbb{Q}_{\ell,A_{\gamma,\epsilon}}\to i_{B_{\gamma,\epsilon},A_{\gamma,\epsilon} *}\mathbb{Q}_{\ell,B_{\gamma,\epsilon}}),\\
R\Gamma_{X^0_{\gamma,\epsilon}}(A^0_{\gamma,\epsilon},\mathbb Q_{\ell})&\stackrel{\text{qis}}{\to} R(f^{\gamma,\epsilon}|_{A^0_{\gamma,\epsilon}})_*\text{Cone} (\mathbb{Q}_{\ell,A^0_{\gamma,\epsilon}}\to i_{B^0_{\gamma,\epsilon},A^0_{\gamma,\epsilon} *}\mathbb{Q}_{\ell,B^0_{\gamma,\epsilon}}),
\end{align*}
where $A^0_{\gamma,\epsilon}:=\{(x,y,z)\in A_{\gamma,\epsilon}: |x||y|=0\}$ and $B^0_{\gamma,\epsilon}:=B_{\gamma,\epsilon}\cap A^0_{\gamma,\epsilon}$. To abuse notation we shall use from now on $\mathbb{Q}_{\ell}$ in stead of $\mathbb{Q}_{\ell,A_{\gamma,\epsilon}}$, $\mathbb{Q}_{\ell,B_{\gamma,\epsilon}}$, $\mathbb{Q}_{\ell,A^0_{\gamma,\epsilon}}$ or $\mathbb{Q}_{\ell,B^0_{\gamma,\epsilon}}$.  

\begin{theorem}\label{finaltheorem}
With the previous notation and hypotheses, there is a canonical quasi-isomorphism of complexes
$$Rf^{\gamma,\epsilon}_*\text{\rm Cone} (\mathbb{Q}_{\ell}\to i_{B_{\gamma,\epsilon},A_{\gamma,\epsilon} *}\mathbb{Q}_{\ell})\stackrel{\text{qis}}{\to} R(f^{\gamma,\epsilon}|_{A^0_{\gamma,\epsilon}})_*\text{\rm Cone} (\mathbb{Q}_{\ell}\to i_{B^0_{\gamma,\epsilon},A^0_{\gamma,\epsilon} *}\mathbb{Q}_{\ell}).$$
\end{theorem}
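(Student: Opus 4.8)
The plan is to reduce the theorem to a single cohomological vanishing and then exploit the weight $(1,-1,0)$ torus action. Since $f^{\gamma,\epsilon}$ and its restriction to $A^0_{\gamma,\epsilon}$ both have target the one-point space $\mathcal M(\widehat{K^s})$, one has $Rf^{\gamma,\epsilon}_*=R\Gamma(A_{\gamma,\epsilon},-)$ and $R(f^{\gamma,\epsilon}|_{A^0_{\gamma,\epsilon}})_*=R\Gamma(A^0_{\gamma,\epsilon},-)$. Put $\mathcal K:=\text{Cone}(\mathbb Q_\ell\to i_{B_{\gamma,\epsilon},A_{\gamma,\epsilon}*}\mathbb Q_\ell)$ on $A_{\gamma,\epsilon}$. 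Base change along the closed immersion $A^0_{\gamma,\epsilon}\hookrightarrow A_{\gamma,\epsilon}$ gives $\mathcal K|_{A^0_{\gamma,\epsilon}}=\text{Cone}(\mathbb Q_\ell\to i_{B^0_{\gamma,\epsilon},A^0_{\gamma,\epsilon}*}\mathbb Q_\ell)$, so the two sides of the theorem become $R\Gamma(A_{\gamma,\epsilon},\mathcal K)$ and $R\Gamma(A^0_{\gamma,\epsilon},\mathcal K|_{A^0_{\gamma,\epsilon}})$, and the comparison morphism is restriction to the closed subspace $A^0_{\gamma,\epsilon}$. Feeding the recollement triangle for the closed--open pair $A^0_{\gamma,\epsilon}\hookrightarrow A_{\gamma,\epsilon}\hookleftarrow A^1_{\gamma,\epsilon}$ into $R\Gamma(A_{\gamma,\epsilon},-)$, and computing $\mathcal K|_{A^1_{\gamma,\epsilon}}$ by the recollement for the closed subspace $B_{\gamma,\epsilon}\cap A^1_{\gamma,\epsilon}$ of $A^1_{\gamma,\epsilon}$ with open complement $X^1_{\gamma,\epsilon}$, one finds that the cone of the comparison morphism is $R\Gamma(A_{\gamma,\epsilon},\jmath_!\mathbb Q_\ell)[2]$, where $\jmath\colon X^1_{\gamma,\epsilon}=\{(x,y,z)\in A_{\gamma,\epsilon}:|x||y|\neq 0,\ |x|<\gamma^{-1}\}\hookrightarrow A_{\gamma,\epsilon}$ is the inclusion of an open subspace. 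Since $A^0_{\gamma,\epsilon}$ is closed but not open in $A_{\gamma,\epsilon}$, this step must be threaded through the triangles rather than a direct-sum splitting. Thus the theorem is equivalent to $R\Gamma(A_{\gamma,\epsilon},\jmath_!\mathbb Q_\ell)=0$.

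Next I would bring in the torus action. The weight $(1,-1,0)$ action analytifies to an action of $\mathbb G_{m,\widehat{K^s}}^{\text{an}}$ on $\mathbb A^{d,\text{an}}_{\widehat{K^s}}$ preserving $\{f=t\}$, because $f(\tau x,\tau^{-1}y,z)=f(x,y,z)$; its ring of invariants is generated by $z$ and the monomials $x_iy_j$. I would then consider the morphism $p\colon A_{\gamma,\epsilon}\to\mathcal B$, $(x,y,z)\mapsto(z,(x_iy_j)_{i,j})$, where $\mathcal B$ is the compact image of $p$ in $\mathbb A^{d_3+d_1d_2,\text{an}}_{\widehat{K^s}}$. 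As $A_{\gamma,\epsilon}$ and $\mathcal B$ are compact, Berkovich's base change theorem for cohomology with compact supports yields $R\Gamma(A_{\gamma,\epsilon},\jmath_!\mathbb Q_\ell)=R\Gamma(\mathcal B,Rp_!(\jmath_!\mathbb Q_\ell))$ with $(R^qp_!(\jmath_!\mathbb Q_\ell))_b\cong H^q_c(p^{-1}(b),\jmath_!\mathbb Q_\ell|_{p^{-1}(b)})$, so it suffices to kill every fibre contribution. Over $b=(z_0,(w_{ij}))$ with all $w_{ij}=0$, the fibre lies inside $\{|x||y|=0\}\cap A_{\gamma,\epsilon}=A^0_{\gamma,\epsilon}$, hence is disjoint from $X^1_{\gamma,\epsilon}$ and $\jmath_!\mathbb Q_\ell$ restricts to $0$ there. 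If some $w_{ij}\neq 0$, reindex so that $w_{11}\neq 0$: then $x_1$ is invertible on the fibre, all other coordinates are expressed through $x_1$ by $x_i=w_{i1}w_{11}^{-1}x_1$ and $y_j=w_{1j}x_1^{-1}$, the remaining relations hold automatically, and $p^{-1}(b)$ is the closed annulus $\mathcal C=\{\rho_1\leq|x_1|\leq\rho_2\}$ over $\widehat{K^s}$ whose radii are governed by $\gamma,\epsilon$ and the $|w_{ij}|$ and satisfy $\rho_1\leq\rho_2$ (since $\max_i|x_i|\cdot\max_j|y_j|=|x||y|\leq\epsilon$ on $A_{\gamma,\epsilon}$); moreover $X^1_{\gamma,\epsilon}\cap p^{-1}(b)=\{\rho_1\leq|x_1|<\rho_2\}$, that is, $\mathcal C$ minus its outer boundary $\partial_+\mathcal C:=\{|x_1|=\rho_2\}$ (empty when $\rho_1=\rho_2$). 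Hence, $\mathcal C$ being compact, the fibre contribution equals $\text{fib}\bigl(R\Gamma(\mathcal C,\mathbb Q_\ell)\to R\Gamma(\partial_+\mathcal C,\mathbb Q_\ell)\bigr)$, and everything comes down to showing this restriction is a quasi-isomorphism.

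For a closed annulus $\mathcal C$ and its outer boundary $\partial_+\mathcal C$ over the algebraically closed field $\widehat{K^s}$, I would prove $R\Gamma(\mathcal C,\mathbb Q_\ell)\to R\Gamma(\partial_+\mathcal C,\mathbb Q_\ell)$ is a quasi-isomorphism as follows. Working with $\mathbb Z/\ell^n$-coefficients and passing to the limit, both $\mathcal C$ and $\partial_+\mathcal C$ are connected one-dimensional analytic spaces with vanishing cohomology in degrees $\geq 2$; the map is the identity $\mathbb Z/\ell^n\to\mathbb Z/\ell^n$ on $H^0$; and by the Kummer sequence together with the triviality of the Picard groups, $H^1$ of each is $\mathcal O^{\times}/(\mathcal O^{\times})^{\ell^n}$, cyclic of order $\ell^n$ generated on both sides by the coordinate $x_1$ (a unit on $\partial_+\mathcal C$), with the restriction taking generator to generator. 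Granting this, every $H^q_c(p^{-1}(b),\jmath_!\mathbb Q_\ell)$ vanishes, so $Rp_!(\jmath_!\mathbb Q_\ell)=0$ and $R\Gamma(A_{\gamma,\epsilon},\jmath_!\mathbb Q_\ell)=0$, which is the theorem.

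The main obstacle is exactly this last annulus statement. It is genuinely non-topological: the canonical deformation retraction of $\mathcal C$ onto its skeleton segment is a homotopy equivalence that would wrongly force $H^1=0$, so one has no option but to run the Kummer computation (equivalently, to recognise $\mathcal C$ as the analytic nearby fibre of a node and $\partial_+\mathcal C$ as the boundary piece carried by one branch), and the vanishing of $H^{\geq 2}$ of the one-dimensional affinoid $\mathcal C$ and of $\partial_+\mathcal C$ over $\widehat{K^s}$ has to be argued on its own. The secondary difficulties are the bookkeeping of the first reduction, where the partition $A_{\gamma,\epsilon}=A^0_{\gamma,\epsilon}\sqcup A^1_{\gamma,\epsilon}$ is merely closed--open and forces one to thread the recollement triangles and base-change identities carefully, and the verification that $p$ is a compact morphism to which the base change theorem legitimately applies and that its fibres over $\{\text{some }w_{ij}\neq 0\}$ are precisely the closed annuli described above (the inequality $\rho_1\leq\rho_2$ being where the hypotheses $|x|\leq\gamma^{-1}$, $|y|\leq\gamma\epsilon$ enter).
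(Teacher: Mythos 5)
Your strategy matches the paper's at every structural level: both reduce, via the closed/open recollement for $A_{\gamma,\epsilon}=A^0_{\gamma,\epsilon}\sqcup A^1_{\gamma,\epsilon}$, to the vanishing of the compactly-supported cohomology on $A^1_{\gamma,\epsilon}$ of a complex which (up to shift) is the extension by zero of $\mathbb Q_\ell$ from $X^1_{\gamma,\epsilon}$; both then fibre this over a base using the free torus action of weight $(1,-1,0)$; and both apply Berkovich's weak base change theorem to reduce to the stalkwise statement that the inclusion of the outer thin annulus into the closed annulus induces an isomorphism in $\ell$-adic \'etale cohomology. Indeed your $R\Gamma(A_{\gamma,\epsilon},\jmath_!\mathbb Q_\ell)$ equals, up to a shift, the paper's $R\overline{f}^{\gamma,\epsilon}_!\text{Cone}(\mathbb Q_\ell\to i_{B^1_{\gamma,\epsilon},A^1_{\gamma,\epsilon}*}\mathbb Q_\ell)$, since $\text{Cone}(\mathbb Q_\ell\to i_{B^1_{\gamma,\epsilon},A^1_{\gamma,\epsilon}*}\mathbb Q_\ell)\cong \jmath_!\mathbb Q_\ell[1]$ on $A^1_{\gamma,\epsilon}$. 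Where you genuinely depart is in the choice of base for the fibration: the paper maps to the abstract orbit space $\mathcal P=((\mathbb A^{d_1+d_2}\setminus\{0\})\times\mathbb A^{d_3})/\mathbb G_m^{\text{an}}$ and in Lemma~\ref{fl} asserts this is a compact Hausdorff analytic space by analogy with $\mathbb P^d$; you instead map $A_{\gamma,\epsilon}$ by the invariant monomials $(z,(x_iy_j))$ to its explicit compact image $\mathcal B$ in affine analytic space. Your version is cleaner and arguably repairs a soft spot in the paper: the quotient $\mathcal P$ as defined is not compact (it surjects onto $\mathbb A^{d_3}$) and its separation is delicate, whereas $\mathcal B$, the continuous image of a compact analytic domain, is compact with no reservations, and $p$ is a compact morphism between compact spaces to which the base-change formalism applies directly. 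You also say more than the paper about the key annulus lemma, which the paper merely asserts; your Kummer-sequence identification of $H^1$ of both the closed and the thin annulus with the cyclic group generated by the unit $x_1$ is the correct argument, and the two facts you flag as remaining gaps (triviality of the Picard groups and vanishing of $H^{\geq 2}$ for one-dimensional affinoids over the algebraically closed $\widehat{K^s}$) are standard and in any case are exactly what the paper itself leaves unproven. Two small cautions: the parenthetical ``(empty when $\rho_1=\rho_2$)'' should attach to $\mathcal C\setminus\partial_+\mathcal C$, not to $\partial_+\mathcal C$ (when $\rho_1=\rho_2$ the outer boundary is all of $\mathcal C$, so the fibre of the restriction map is zero for the trivial reason); and you should note that $f$, being $\mathbb G_m$-invariant, is a polynomial in $z$ and the $x_iy_j$, so the condition $f=t$ descends to $\mathcal B$ and imposes no extra constraint on the fibres of $p$ — this is implicitly used when you identify $p^{-1}(b)$ with a full closed annulus.
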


\begin{proof}
The space $A^1_{\gamma,\epsilon}:=\{(x,y,z)\in A_{\gamma,\epsilon}: |x||y|\not=0\}$ together with $A^0_{\gamma,\epsilon}$ composing a disjoint union of $A_{\gamma,\epsilon}$, there exists a canonical exact triangle  
\begin{align}\label{cet}
\to R\overline{f}^{\gamma,\epsilon}_!\text{Cone} (\mathbb{Q}_{\ell}\to i_{B^1_{\gamma,\epsilon},A^1_{\gamma,\epsilon} *}\mathbb{Q}_{\ell})\to Rf^{\gamma,\epsilon}_*\text{Cone} (\mathbb{Q}_{\ell}\to i_{B_{\gamma,\epsilon},A_{\gamma,\epsilon} *}\mathbb{Q}_{\ell})\notag\\
\to \quad R(f^{\gamma,\epsilon}|_{A^0_{\gamma,\epsilon}})_*\text{Cone} (\mathbb{Q}_{\ell}\to i_{B^0_{\gamma,\epsilon},A^0_{\gamma,\epsilon} *}\mathbb{Q}_{\ell})\to,
\end{align}
where $\overline{f}^{\gamma,\epsilon}:=f^{\gamma,\epsilon}|_{A^1_{\gamma,\epsilon}}$ and $B^1_{\gamma,\epsilon}:=B_{\gamma,\epsilon}\cap A^1_{\gamma,\epsilon}$. 
We are going to verify the following 
\begin{align}\label{tozero}
R\overline{f}^{\gamma,\epsilon}_!\text{Cone} (\mathbb{Q}_{\ell}\to i_{B^1_{\gamma,\epsilon},A^1_{\gamma,\epsilon} *}\mathbb{Q}_{\ell})\stackrel{\text{qis}}{\to} 0.
\end{align}

Let us consider the action of $\mathbb G_{m,\widehat{K^s}}^{\text{an} }$ on $\mathbb A_{\widehat{K^s},\text{Ber}}^d$ given by $\tau\cdot (x,y,z)=(\tau x,\tau^{-1}y,z)$ for $\tau\in\mathbb G_{m,\widehat{K^s}}^{\text{an} }$ and $(x,y,z)\in\mathbb A_{\widehat{K^s},\text{Ber}}^d$. This $\mathbb G_{m,\widehat{K^s}}^{\text{an} }$-action is free, since $\tau\cdot (x,y,z)=(x,y,z)$ if and only if $\tau=1$. Each orbit of the action on $A^1_{\gamma,\epsilon}$ has the following form
$$\mathbb G_{m,\widehat{K^s}}^{\text{an} }\cdot(x,y,z)\cap A^1_{\gamma,\epsilon}=\{(\tau x,\tau^{-1}y,z): \gamma^{-1}\epsilon^{-1}|y|\leq |\tau|\leq \gamma^{-1}|x|^{-1}\}$$
for $(x,y,z)\in A^1_{\gamma,\epsilon}$. Also, an orbit of $\mathbb G_{m,\widehat{K^s}}^{\text{an} }$-action on $B^1_{\gamma,\epsilon}$ is of the form 
$$\mathbb G_{m,\widehat{K^s}}^{an}\cdot(x,y,z)\cap B^1_{\gamma,\epsilon}=\{(\tau x,\tau^{-1}y,z): |\tau|=\gamma^{-1}|x|^{-1}\}$$
for $(x,y,z)\in B^1_{\gamma,\epsilon}$. Furthermore, the $\mathbb G_{m,\widehat{K^s}}^{\text{an} }$-action has the following 
\begin{property} 
Every orbit on $\mathbb A_{\widehat{K^s},\text{Ber}}^d$ intersects with $A^1_{\gamma,\epsilon}$ in a closed annulus $C$ and with $B^1_{\gamma,\epsilon}$ in a thin annulus contained in $C$.
\end{property}

Let $\mathcal P$ be the space of orbits of $\mathbb G_{m,\widehat{K^s}}^{\text{an} }$-action on $\mathbb A_{\widehat{K^s},\text{Ber}}^d$.
By Lemma \ref{fl}, $\mathcal P$ admits an obvious structure of a $\widehat{K^s}$-analytic space. The property (*) deduces that the restriction maps of the natural projection onto $\mathcal P$ on $A^1_{\gamma,\epsilon}$ and on $B^1_{\gamma,\epsilon}$, say, $a: A^1_{\gamma,\epsilon}\to \mathcal P$ and $b: B^1_{\gamma,\epsilon}\to \mathcal P$, are surjective. We remark that $\overline{f}^{\gamma,\epsilon}$ and $\overline{f}^{\gamma,\epsilon}|_{B^1_{\gamma,\epsilon}}$ factor through $a$ and $b$, respectively. Since one has a spectral sequence (the Leray spectral sequence, see Berkovich \cite[Thm. 5.2.2]{Ber1})
$$H_c^n(\mathcal P,R^ma_!\text{Cone} (\mathbb{Q}_{\ell}\to i_{B^1_{\gamma,\epsilon},A^1_{\gamma,\epsilon} *}\mathbb{Q}_{\ell})) \Rightarrow R^{n+m}\overline{f}^{\gamma,\epsilon}_!\text{Cone} (\mathbb{Q}_{\ell}\to i_{B^1_{\gamma,\epsilon},A^1_{\gamma,\epsilon} *}\mathbb{Q}_{\ell}),$$
it suffices to verify that $Ra_!\text{Cone} (\mathbb{Q}_{\ell}\to i_{B^1_{\gamma,\epsilon},A^1_{\gamma,\epsilon} *}\mathbb{Q}_{\ell})$ is quasi-isomorphic to $0$. Let us consider the following exact triangle of complexes on $\mathcal P$:
$$\to Ra_!\mathbb{Q}_{\ell}\to Rb_!\mathbb{Q}_{\ell}\to Ra_!\text{Cone} (\mathbb{Q}_{\ell}\to i_{B^1_{\gamma,\epsilon},A^1_{\gamma,\epsilon} *}\mathbb{Q}_{\ell})[+1]\to.$$
Applying the Berkovich's weak base change theorem \cite[Thm. 5.3.1]{Ber1}, we have
\begin{align*}
(R^ma_!\mathbb{Q}_{\ell})_{\lambda}\cong H_c^m(a^{-1}(\lambda),\mathbb{Q}_{\ell}),\quad (R^mb_!\mathbb{Q}_{\ell})_{\lambda}\cong H_c^m(b^{-1}(\lambda),\mathbb{Q}_{\ell})
\end{align*}
for $\lambda\in\mathcal P$ and $m\geq 0$. The embedding of the thin annulus $b^{-1}(\lambda)$ into the closed annulus $a^{-1}(\lambda)$ inducing an isomorphism on \'etale cohomology (here since $a^{-1}(\lambda)$ and $b^{-1}(\lambda)$ are compact, their \'etale cohomology and \'etale cohomology with compact support are the same), we obtain $(R^ma_!\mathbb{Q}_{\ell})_{\lambda}\cong (R^mb_!\mathbb{Q}_{\ell})_{\lambda}$. In other words, for $\lambda\in\mathcal P$ and $m\geq 0$,
$$R^ma_!\text{Cone} (\mathbb{Q}_{\ell}\to i_{B^1_{\gamma,\epsilon},A^1_{\gamma,\epsilon} *}\mathbb{Q}_{\ell})_{\lambda}\cong 0.$$
This prove (\ref{tozero}), which together with (\ref{cet}) implies the theorem.
\end{proof}

\begin{lemma}\label{fl}
There is a natural structure of an analytic space on the quotient 
$$\mathcal P=(\mathbb A_{\widehat{K^s},\text{Ber}}^{d_1+d_2}\setminus\{0\})\times \mathbb A_{\widehat{K^s},\text{Ber}}^{d_3}/\mathbb G_{m,K^s}^{\text{an}}.$$
\end{lemma}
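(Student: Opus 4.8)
The plan is to realise $\mathcal{P}$ by an explicit atlas, mimicking the analytic construction of projective space. Throughout, $\mathbb{G}_{m,\widehat{K^s}}^{\text{an}}$ acts on $\mathbb{A}^{d_1+d_2}_{\widehat{K^s},\text{Ber}}$ by $\tau\cdot(x,y)=(\tau x,\tau^{-1}y)$; the $z$-coordinates carry the trivial action and will be reinstated at the end. First I note that this action is free on $\mathbb{A}^{d_1+d_2}_{\widehat{K^s},\text{Ber}}\setminus\{0\}$: the only Berkovich point at which every coordinate vanishes is the origin (the kernel of the associated seminorm then contains, hence equals, the maximal ideal $(x,y)$), which has been removed; away from it some coordinate is invertible on a neighbourhood, and on the corresponding locus the action is manifestly free. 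Hence the open analytic subspaces $U_i:=\{x_i\neq0\}$ ($1\le i\le d_1$) and $V_j:=\{y_j\neq0\}$ ($1\le j\le d_2$) are $\mathbb{G}_m$-invariant and cover $\mathbb{A}^{d_1+d_2}_{\widehat{K^s},\text{Ber}}\setminus\{0\}$.

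Next I trivialise the action on each chart. On $U_i$ the function $x_i^{-1}$ is analytic, and
\[
\rho_i:U_i\longrightarrow \mathbb{A}^{d_1+d_2-1}_{\widehat{K^s},\text{Ber}},\qquad (x,y)\longmapsto\bigl((x_k/x_i)_{k\neq i},\,(x_iy_j)_j\bigr),
\]
is a $\mathbb{G}_m$-invariant morphism whose fibres are exactly the $\mathbb{G}_m$-orbits; it has the section $s_i$ sending a point to the unique representative of its orbit with $x_i=1$. The morphism $\mathbb{G}_{m,\widehat{K^s}}^{\text{an}}\times\mathbb{A}^{d_1+d_2-1}_{\widehat{K^s},\text{Ber}}\to U_i$, $(\tau,w)\mapsto\tau\cdot s_i(w)$, is an isomorphism intertwining the $\mathbb{G}_m$-action with translation on the first factor, so $\rho_i$ exhibits $U_i\to U_i/\mathbb{G}_m\cong\mathbb{A}^{d_1+d_2-1}_{\widehat{K^s},\text{Ber}}$; the same goes for $V_j$. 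On an overlap $U_i\cap U_{i'}$ the two quotient maps differ by an isomorphism of open analytic subspaces of $\mathbb{A}^{d_1+d_2-1}_{\widehat{K^s},\text{Ber}}$ given by monomials in the coordinates (sending $x_k/x_i\mapsto x_k/x_{i'}=(x_k/x_i)(x_i/x_{i'})$ and $x_iy_j\mapsto x_{i'}y_j=(x_{i'}/x_i)(x_iy_j)$), and likewise for $U_i\cap V_j$ and $V_j\cap V_{j'}$; these are analytic, and the cocycle condition is automatic since all of them are induced by the single quotient map on the source.

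Glueing the $d_1+d_2$ charts along these transition isomorphisms in the category of $\widehat{K^s}$-analytic spaces (see \cite{Ber}) yields an analytic space $\mathbb{P}$ together with an open surjective morphism $q:\mathbb{A}^{d_1+d_2}_{\widehat{K^s},\text{Ber}}\setminus\{0\}\to\mathbb{P}$ whose fibres are the $\mathbb{G}_m$-orbits; thus $q$ identifies $\mathbb{P}$, as a topological space, with the orbit space, and endows the latter with an analytic structure. (We do not assert that $\mathbb{P}$ is separated -- for mixed weights $\pm1$ it need not be -- but Berkovich's formalism does not require this.) Finally, the $\mathbb{G}_m$-action on $(\mathbb{A}^{d_1+d_2}_{\widehat{K^s},\text{Ber}}\setminus\{0\})\times\mathbb{A}^{d_3}_{\widehat{K^s},\text{Ber}}$ is the product of the above action with the trivial one, so its quotient is $\mathbb{P}\times\mathbb{A}^{d_3}_{\widehat{K^s},\text{Ber}}$, which is the desired analytic structure on $\mathcal{P}$.

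The one step requiring an actual argument rather than bookkeeping is the trivialisation: verifying that $(\tau,w)\mapsto\tau\cdot s_i(w)$ is an isomorphism of analytic spaces, and hence that $\rho_i$ is a genuine quotient in the analytic category, so that the glued space has the expected points (the orbits) and the expected structure sheaf. Once this is granted, the construction of $\mathbb{P}$, and then of $\mathcal{P}$, is a routine glueing of analytic spaces.
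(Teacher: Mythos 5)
Your proposal is correct and carries out, with an explicit atlas, exactly the construction the paper's proof gestures at: cover by the $\mathbb{G}_m$-invariant opens $\{x_i\ne 0\}$, $\{y_j\ne 0\}$, trivialise the action on each chart, and glue, just as for $\mathbb{P}^{d,\text{an}}$ (the paper simply says the construction is ``analogous'' to the projective case and refers to Temkin). The one place where you depart from the paper is your parenthetical remark that $\mathbb{P}$ need not be separated, and there you are right while the paper is wrong: the paper's assertion that $\mathcal{P}$ with the quotient topology ``obviously is a compact Hausdorff space'' fails for the weight-$(1,-1)$ action. Already when $d_1=d_2=1$, $d_3=0$, your two charts $U_1=\{x\ne 0\}$ and $V_1=\{y\ne 0\}$ are each identified with $\mathbb{A}^{1,\text{an}}_{\widehat{K^s}}$ by the single invariant $xy$, glued along $\{xy\ne 0\}$ by the identity --- the analytic affine line with a doubled origin, which is neither Hausdorff nor compact. (Concretely, any invariant open containing the orbit of $(1,0)$ contains some $(1,\epsilon)$ with $0<|\epsilon|$ small, which is orbit-equivalent to $(\epsilon,1)$ and hence lies in any invariant open containing the orbit of $(0,1)$.) Since Berkovich's formalism requires the underlying space only to be locally Hausdorff, the lemma's conclusion --- that $\mathcal{P}$ carries a natural analytic structure --- is unaffected, and your version is the careful one; the ``compact Hausdorff'' sentence in the paper should simply be dropped.
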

\begin{proof}
We endow $\mathcal P$ with the quotient topology, then obviously it is a compact Hausdorff space. The construction of analytic structure on $\mathcal P$ is analogous to that of the projective analytic spaces $\mathbb P_{\widehat{K^s},\text{Ber}}^d$, where the natural $\mathbb G_{m,\widehat{K^s}}^{\text{an} }$-action on $\mathbb A_{\widehat{K^s},\text{Ber}}^d$ is replaced by the $\mathbb G_{m,\widehat{K^s}}^{\text{an} }$-action given by $\tau\cdot(x,y,z)=(\tau x, \tau^{-1}y,z)$, which is also free. See \cite{Tem} for the construction in detail of $\mathbb P_{K,\text{Ber}}^d$.
\end{proof}

\end{document}